\newenvironment{dem}{\noindent{\bf Proof.}}{\qed\\}
\newtheorem{defi}{Definition}[section]
\newtheorem{theo}[defi]{Theorem}
\newtheorem{prop}[defi]{Proposition}
\newtheorem{lem}[defi]{Lemme}
\newtheorem{rmq}[defi]{Remark}
\numberwithin{equation}{section}
\begin{document}
\title{An $\ell_1$-oracle inequality for the Lasso in finite mixture of multivariate Gaussian regression models}
\author{Emilie Devijver}\address{Laboratoire de Mathématiques d'Orsay, Faculté des Sciences d'Orsay, Université Paris-Sud, 91405 Orsay, France ; e-mail: emilie.devijver@math.u-psud.fr}
%
%
\begin{abstract}
We consider a multivariate finite mixture of Gaussian regression models for high-dimensional data, where the number of covariates and the size of the response may be much larger than the sample size.
We provide an $\ell_1$-oracle inequality satisfied by the Lasso estimator according to the Kullback-Leibler loss.
This result is an  extension of the $\ell_1$-oracle inequality established by Meynet in \cite{Meynet} in the multivariate case.
We focus on the Lasso for its $\ell_1$-regularization properties rather than for the variable selection procedure, as it was done in St\"adler in \cite{Stadler}.
\end{abstract}

\subjclass{62H30}
\keywords{Finite mixture of multivariate regression model, Lasso, $\ell_1$-oracle inequality}
\maketitle
\section*{Introduction}
Finite mixture regression models are useful for modeling the relationship between response and predictors, arising from different subpopulations.
Due to computer progress, we are faced with high-dimensional data where the number of variables can be much larger than the sample size.
Moreover, the response variable could be high-dimensional.
We have to reduce the dimension to avoid identifiability problems.
Considering a mixture of linear models, an assumption widely used is to say that only a few covariates explain the response.
Among various methods, we focus on the $\ell_1$-penalized least squares estimator of parameters to lead to sparse regression matrix.
Indeed, it is a convex surrogate for the non-convex $\ell_0$-penalization, and produce sparse solutions.
First introduced by Tibshirani in \cite{Tibshirani}, the Lasso estimator is defined by
$$\hat{\beta}(\lambda) = \underset{\beta \in \mathbb{R}^{p}}{\operatorname{argmin}} \left\{ ||Y-X\beta ||_2^2 + \lambda ||\beta||_1 \right\}, \hspace{1cm} \lambda>0,$$
in a linear model, where $X \in \mathbb{R}^p$ and $Y \in \mathbb{R}$.
Many results have been proved to study the performance of this estimator.
For example, cite \cite{BickelRitovTsybakov,EfronHastie,Hebiri} for studying this estimator as a variable selection procedure in this linear model case.
Note that those results need strong restrictive eigenvalue assumptions on the Gram matrix $X^t X$, that can be not fulfilled in practice.
A summary of assumptions and results is given by B\"uhlmann and van de Geer in \cite{BuhlmannVDG}. One can also cite van de Geer in \cite{VDG} and discussions, who precise a chaining argument to perform rate, even in a non linear case.

If we assume that data arise from different subpopulations, we could work with finite mixture regression models.
The homogeneity assumption of the linear model is often inadequate and restrictive.
With this model, one can mention St\"adler et al., in \cite{Stadler}, who work with finite mixture of linear models.
They assume that, for $i= 1,\ldots,n$, $Y_i$ follows a law density $s_\psi(.|x_i)$ which is a finite mixture of $k$ Gaussian densities with proportion vector $\pi$,
$$Y_i|X_i=x_i \sim s_\psi(Y_i|x_i) = \sum_{r=1}^k \frac{\pi_r}{\sqrt{2 \pi}\sigma_r} \exp\left( -\frac{(Y_i-\mu_r^t x_i)^2}{2 \sigma_r^2} \right)$$
for some parameters $\psi = (\pi_r,\mu_{r,j},\sigma_r)_{r \in \{1,\ldots,k\} ; j \in \{1,\ldots,p\}}$.
They extend the Lasso estimator by
$$\hat{s}(\lambda) = \underset{s_\psi}{\operatorname{argmin}} \left\{ -\frac{1}{n} \sum_{i=1}^n \log(s_\psi(Y_i|x_i)) + \lambda \sum_{r=1}^k \sum_{j=1}^p |\mu_{r,j}| \right\}, \hspace{1cm} \lambda>0$$

For this estimator, they provide an $\ell_0$-oracle inequality satisfied by this Lasso estimator, according to restricted eigenvalue conditions also, and margin conditions, which lead to link the Kullback-Leibler loss function to the $\ell_2$-norm of the parameters.

Another way to study this estimator is to look after the Lasso for its $\ell_1$-regularization properties.
For example, cite \cite{MassartMeynet,Meynet,RigolletTsybakov}. Contrary to the $\ell_0$-results, some $\ell_1$-results are valid with no assumptions,
neither on the Gram matrix, nor on the margin. This can be achieved due to the fact that we are only lookinf for rate of convergence of order $\frac{||s_\psi|_1}{\sqrt{n}}$ rather than $\frac{||s_\psi|_0}{n}$.
For instance, we could cite Meynet in \cite{Meynet} who give an $\ell_1$-oracle inequality for the Lasso estimator
for the finite mixture Gaussian regression models.
In this paper, we extend this result to finite mixture of multivariate Gaussian regression models.
Indeed, we will work with $(X,Y) \in \mathbb{R}^p \times \mathbb{R}^q$.
As in \cite{Meynet}, we shall restrict to the fixed design case, that is to say non-random regressors.
Under only bounded parameters assumption, we provide a lower bound on the Lasso regularization parameter $\lambda$, to guarantee such an oracle inequality.

This result is non-asymptotic: the number of observations is fixed, and the number $p$ of covariates can grow.
The number $k$ of clusters in the mixture is fixed.
It is deduced from a finite mixture Gaussian regression model selection theorem for $\ell_1$-penalized maximum likelihood condition density estimation.
We establish it following the one of Meynet in \cite{Meynet}, which combines Vapnik's structural risk minimization method (see Vapnik in \cite{Vapnik}) and theory around model selection 
(see Le Pennec and Cohen in \cite{Lepennec} and Massart in \cite{Massart}).
As in Massart and Meynet in \cite{MassartMeynet}, our oracle inequality is deduced from this general theorem because the Lasso is viewed as the solution
of a penalized maximum likelihood model selection procedure over a countable collection of $\ell_1$-ball models.

The article is organized as follows. The notations and the framework are introduced in Section \ref{framework}.
In Section \ref{SectionInegOracle}, we state the main result of the article, which is an $\ell_1$-oracle inequality satisfied by the Lasso in finite mixture of multivariate Gaussian regression models.
Section \ref{proof} is devoted to the proof of this result, deriving from two easier propositions. Those propositions are proved in Section \ref{proofProposition}, whereas details of lemma states in Section \ref{proofLemme}.

\section{Notations and framework}
\label{framework}
\subsection{Finite mixture regression model}
We observe $n$ independent couples $((x_{i}, y_{i}))_{1 \leq i \leq n}$ of random variables $(X,Y)$, with $Y_i  \in \mathbb{R}^q$ and $X_i \in \mathbb{R}^p$, coming from a probability distribution with unknown conditional density $s_0$.
We assume that the model could be estimated by a finite mixture of $k$ Gaussian regressions. Then we assume that the data come from several subpopulations, each of them following a conditional density estimated by a multidimensional Gaussian density.

The random response variable $ Y \in \mathbb{R}^q$ depends on a set of explanatory variables, written $X \in \mathbb{R}^p$, through a regression-type model.
We assume that:
\begin{itemize}
 \item the variables $Y_i|X_i$ are independent, for all $i=1,\ldots, n$ ;
 \item the variables $Y_i|X_i=x_i \sim s_{\xi}(y|x_i)dy$, with
  \begin{align}
  &s_{\xi}(y|x_i)=\sum_{r=1}^{k} \frac{\pi_{r}}{(2 \pi)^{\frac{q}{2}}  \text{det}(\Sigma_r)^{1/2}} \exp \left( -\frac{(y-\beta_{r} x_i)^t \Sigma_{r}^{-1}(y-\beta_{r} x_i)}{2} \right)\\
  &\xi=(\pi_{1},\ldots, \pi_{k},\beta_{1},\ldots,\beta_{k},\Sigma_{1},\ldots,\Sigma_{k}) \in \Xi_k = \left( \Pi_{k} \times (\mathbb{R}^{q\times p})^k\times (\mathbb{S}^q_+)^k \right) \nonumber\\
  & \Pi_{k} = \left\{ (\pi_{1}, \ldots, \pi_{k}) ; \pi_{r} >0 \text{ for } r \in \{1,\ldots, k\} \text{ and } \sum_{r=1}^{k} \pi_{r} = 1 \right\} \nonumber\\
  & \mathbb{S}_+^q \text{ is the set of symmetric positive definite matrices on } \mathbb{R}^q . \nonumber
 \end{align}
\end{itemize}

We want to estimate the conditional density function $s_\xi$ from the observations. 
For all $r \in \{1,\ldots, k\}, \beta_{r}$ is the matrix of regression coefficients, and $\Sigma_{r}$ is the covariance matrix in the mixture component $r$. 
The $\pi_{r}$s are the mixture proportions.
In fact, for all $r \in \{1,\ldots,k\}$, for all $z \in \{1,\ldots,q\}$, $\beta_{r}^t x= \sum_{j=1}^{p} \beta_{r,j,.} x_{j}$ is the mean coefficient of the mixture component $r$ for the conditional density $s_{\xi}(.|x)$.

\subsection{Boundedness assumption on the mixture and component parameters}
Let, for a matrix $A$, $||A||_{\min}$ the smallest coefficient of $A$, and $||A||_{\max}$ the greatest coefficient of $A$.
We shall restrict our study to bounded parameters vector $\xi =( \beta, \Sigma,\pi) \in \Xi_k$. 
Specifically, we assume that there exists
deterministic positive constants $a_\beta, A_\beta, a_\Sigma, \tilde{A}_\Sigma,\tilde{a}_\Sigma, A_\Sigma, a_\pi$  such that $\xi$ belongs to $\tilde{\Xi}_k$, with

\begin{multline}
\label{paraborn}
\tilde{\Xi}_k = \left\lbrace  \xi \in \Xi_k: \text{ for all } r \in \{1,\ldots,k \}, a_\beta \leq \min_{z \in \{1 \ldots q\}} \inf_{x \in \mathbb{R}^p}  || \beta_{r,z} x || \leq \max_{z \in \{1 \ldots q\} } \inf_{x \in \mathbb{R}^p}  || \beta_{r,z} x || \leq A_\beta, \right. \\
 \left. \tilde{a}_\Sigma \leq ||\Sigma_{r}||_{\min} \leq ||\Sigma_{r}||_{\max} \leq \tilde{A}_\Sigma,a_\Sigma \leq ||\Sigma_{r}^{-1}||_{\min} \leq ||\Sigma_{r}^{-1}||_{\max} \leq A_\Sigma, a_\pi \leq \pi_r \right\rbrace.
\end{multline}

Let $S$ the set of conditional densities $s_\xi$ in this model:
$$S= \left\lbrace s_\xi, \xi \in \tilde{\Xi}_k \right\rbrace.$$
To simplify the proofs, we also assume that $s_0$ belongs to  $S$: there exists $\xi_0=(\beta_0,\Sigma_0,\pi_0) \in \tilde{\Xi}_k$ such that $s_0 =s_{\xi_0}$.

\subsection{Maximum likelihood estimator and penalization}
In a maximum likelihood approach, the loss function taken into consideration is the KL information, which is defined for two densities $s$ and $t$ by 
$$KL(s,t)=\int_{\mathbb{R}} \log \left( \frac{s(y)}{t(y)}\right) s(y) dy.$$

In a regression framework, we have to adapt this definition to take into account the structure of conditional densities.
For fixed covariates $(x_1,\ldots,x_n)$, we consider
$$KL_n(s,t)=\frac{1}{n} \sum_{i=1}^n KL(s(.|x_i),t(.|x_i)) = \frac{1}{n} \sum_{i=1}^n \int_{\mathbb{R}} \log \left( \frac{s(y|x_i)}{t(y|x_i)} \right) s(y|x_i) dy.$$
Using the maximum likelihood approach, we will estimate $s_0$ by the conditional density $s_{\xi}$ which maximize the likelihood conditionally to $(x_i)_{1\leq i\leq n}$.
Nevertheless, we work with high-dimensional data, then we have to regularize the maximum likelihood estimator to reduce the dimension.
We consider the $\ell_1$-regularization
$$\hat{s}(\lambda) := \underset{s_\xi \in  S}{\operatorname{argmin}} \left\{ -\frac{1}{n} \sum_{i=1}^n \log(s_\xi(Y_i|x_i)) + \lambda |s_{\xi}|_1 \right\} ;$$
where $\lambda >0$ is a regularization parameter, and 
$$|s_\xi|_1 = \sum_{r=1}^k \sum_{z=1}^q \sum_{j=1}^p |\beta_{r,j,z}|$$
for $\xi = (\beta_r,\Sigma_r,\pi_r)_{r=1,\ldots,k}$.

\section{Oracle inequality}
\label{SectionInegOracle}
In this section, we provide an $\ell_1$-oracle inequality satisfied by the Lasso estimator in finite mixture multivariate Gaussian regression models.

We define $||x||_{\max,n} =\sqrt{ \frac{1}{n} \sum_{i=1}^{n} \max_{j=1,\ldots,p} |x_{i,j}|^2 }$.
\begin{theo}
\label{inegOracle}
 Assume that 
$$\lambda \geq \kappa\left( A_\Sigma \vee \frac{1}{a_\pi}\right) \left( 1+4(q+1) A_\Sigma \left(A_\beta^2+\frac{\log(n)}{a_\Sigma}\right) \right) \sqrt{\frac{k}{n}} \left( 1 + q ||x||_{\text{max},n} \log(n) \sqrt{k \log(2p+1)} \right)$$  
with $\kappa$ an absolute positive constant.
 Then, the Lasso estimator, denoted by $\hat{s}(\lambda)$, defined by
 $$ \hat{s}(\lambda) = \underset{s_\xi \in  S}{\operatorname{argmin}} \left( - \frac{1}{n} \sum_{i=1}^n \log(s_\xi (Y_i | X_i)) + \lambda |s_\xi|_1 \right) ;$$
satisfies the $\ell_1$-oracle inequality.

\begin{align*}
&E[KL_n(s_0,\hat{s}(\lambda)) ]\\
&\leq (1+\kappa^{-1}) \inf_{s_\psi \in S} \left(  KL_n (s_0,s_\psi)+\lambda |s_\psi|_1 \right) + \lambda \\
& + \sqrt{\frac{k}{n}} \kappa^{'} \left[ \frac{e^{-\frac{1}{2}-\frac{1}{4} a_{\beta}^2 a_\Sigma} \pi^{q/2} a_\pi}{(qA_\Sigma)^{q/2}} \sqrt{2q}  \right. \\
&+ \left. \left( A_\Sigma \vee \frac{1}{a_\pi} \right) \left( 1+4(q+1) A_\Sigma\left(A_\beta^2+\frac{\log(n)}{a_\Sigma} \right)\right) k\left(1+A_\beta+\tilde{A}_\Sigma\right)^2 \right]
\end{align*}

where $\kappa'$ is a positive constant.
\end{theo}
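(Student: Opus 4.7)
The strategy is to view the Lasso estimator as the solution of a penalized maximum likelihood model selection procedure over a countable collection of $\ell_1$-ball models. Concretely, for each $m \in \mathbb{N}^*$, I would introduce the model $S_m = \{s_\xi \in S : |s_\xi|_1 \leq m \delta\}$ for a carefully chosen discretization step $\delta$, and note that, up to rounding $|\hat{s}(\lambda)|_1$ onto this grid, $\hat{s}(\lambda)$ is a penalized MLE over the family $(S_m)_{m \geq 1}$ with penalty $\mathrm{pen}(m) = \lambda m \delta$. The proof would then rest on a general model selection theorem for maximum likelihood conditional density estimation in the spirit of Massart and of Le Pennec--Cohen: provided $\mathrm{pen}(m)$ dominates, uniformly in $m$, a local complexity term built from the supremum of the centered empirical process indexed by the log-density ratios $-\tfrac{1}{2}\log(s/s_0)$ on $S_m$, one obtains a Kullback-Leibler oracle inequality of exactly the form stated.

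The core task is then to bound this complexity term. I would follow the chaining/symmetrization argument of Meynet: use Bernstein-type concentration for bounded empirical processes together with a Dudley entropy integral in Hellinger-type distance over $S_m$. For this I need a bracketing entropy bound for the conditional density model restricted to an $\ell_1$-ball, which I would derive by computing the Lipschitz dependence of $\log s_\xi(y|x)$ in the parameters $(\pi_r, \beta_r, \Sigma_r)$ using the boundedness constants defining $\tilde{\Xi}_k$, then applying standard covering-number estimates for $\ell_1$-balls (whose $\varepsilon$-entropy behaves like $m \log(1/\varepsilon) \log(2p+1)$). This is the step that produces the $\sqrt{k \log(2p+1)}$ factor in the lower bound on $\lambda$; the factor $\sqrt{k/n}$ is the parametric rate per component; and the factor $\|x\|_{\max,n}$ comes from the Lipschitz constant of the mean $\beta_r x$ in $\beta_r$ with respect to the $\ell_1$-norm.

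The main obstacle, and the main novelty compared to \cite{Meynet}, is handling the multivariate response $y \in \mathbb{R}^q$. The covariance $\Sigma_r$ is now a full positive-definite matrix, which makes the Lipschitz computation for $\log s_\xi$ significantly more delicate: one must control the dependence in $\Sigma_r$ and $\Sigma_r^{-1}$ through their extremal coefficients, and, to obtain a \emph{bounded} envelope for the log-ratios, one has to truncate $y$ at a radius of order $\sqrt{\log(n)/a_\Sigma}$, whose Gaussian tail gives a negligible remainder. The interplay of these three ingredients (quadratic form in $y$, Gaussian tail truncation, and $\|\Sigma_r^{-1}\|_{\max}$) is precisely what produces the prefactor $(q+1)A_\Sigma(A_\beta^2 + \log(n)/a_\Sigma)$ appearing in the statement, and the factor $A_\Sigma \vee 1/a_\pi$ comes from an envelope on the log-ratios themselves.

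Given these ingredients, the final step would be routine bookkeeping: I would split the proof into two propositions, one packaging the general model selection theorem applied to the family $(S_m)$ (turning the complexity control into an $\ell_1$-oracle inequality), and another providing the explicit complexity bound over each $S_m$ by the Dudley/chaining computation sketched above. Combining them yields the theorem, with the $\lambda$-additive residual on the right-hand side absorbing the discretization cost on $|s_\xi|_1$ and the $\sqrt{k/n}$ residual absorbing the unavoidable constants from the entropy integral and the lower bound on $s_\xi$ used to pass from Hellinger to Kullback-Leibler loss. The technical bulk of the paper will therefore lie in the bracketing entropy computation for the multivariate Gaussian mixture, which is where the dimension $q$ of the response genuinely enters.
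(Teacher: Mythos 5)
Your proposal follows essentially the same route as the paper: the Lasso is recast as an approximate penalized MLE over the nested $\ell_1$-balls $S_m$ with $\mathrm{pen}(m)=\lambda m$ (the $+\lambda$ term paying for rounding $|\hat{s}(\lambda)|_1$ up to an integer), the oracle inequality is deduced from a model selection theorem split over the truncation event $\{\max_{i,z}|Y_{i,z}|\leq M_n\}$ with $M_n\asymp\sqrt{\log(n)/a_\Sigma}$, and the complexity term is controlled by concentration plus symmetrization plus a Dudley-type entropy integral fed by the Lipschitz dependence of $\log s_\xi$ on the bounded parameters and an $\ell_1$-ball covering bound yielding the $\sqrt{k\log(2p+1)}$ and $\|x\|_{\max,n}$ factors. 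The only cosmetic difference is that the paper measures entropy via packing numbers of the log-ratio class $F_m$ in the empirical norm $\|\cdot\|_n$ rather than bracketing entropy in a Hellinger-type distance, which does not change the substance of the argument.
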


This theorem provides information about the performance of the Lasso as an $\ell_1$-regularization algorithm. 
If the regularization parameter $\lambda$ is properly chosen, the Lasso estimator, which is the solution of the $\ell_1$-penalized empirical risk minimization problem, 
behaves as well as the deterministic Lasso, which is the solution of the $\ell_1$-penalized true risk minimization problem, up to an error term of order $\lambda$.

Our result is non-asymptotic: the number $n$ of observations is fixed while the number $p$ of covariates can grow with respect to $n$ and can be much larger than $n$. The numbers $k$ of clusters in the mixture is fixed.

There is no assumption neither on the Gram matrix, nor on the margin, which are classical assumptions for oracle inequality, as done in \cite{Stadler}.
Moreover, this kind of assumptions involve unknown constants, whereas here, every constants are explicit.

\begin{rmq}
 Van de Geer, in \cite{VDG}, gives some tools to improve the bound of the regularization parameter to $\sqrt{\frac{\log(p)}{n}}$.
 Nevertheless, we have to control eigenvalues of the Gram matrix of some functions $(\psi_j(X_i))_{j=1,\ldots,D, i=1,\ldots,n}$, $D$ being the number of parameters to estimate, where $\psi_j(X_i)$ satisfies
$$| \log(s_\xi(Y_i[X_i))-\log (s_{\tilde{\xi}}(Y_i|X_i))|\leq \sum_{j=1}^D |\xi_j - \tilde{\xi}_j| \psi_j(X_i).$$
 
 In our case of mixture of regression models, control eigenvalues of the Gram matrix of $(\psi_j(X_i))$ correspond to make some assumptions, as REC, to avoid dimension reliance on $n,k$ and $p$.
 Without this king of assumptions, we could not guarantee that our bound is in order of $\sqrt{\frac{\log(p)}{n}}$, because we could not guarantee that eigenvalues does not depend on dimensions.
 In order to get a result, with smaller assumptions, we do not use the chaining argument developed in \cite{VDG}
.
Nevertheless, one can easily compute that, under restricted eigenvalue condition, we could perform the order of the regularization parameter do $\lambda \asymp \sqrt{\frac{\log(p)}{n}} \log(n)$.
\end{rmq}

\section{Proof of the oracle inequality}
\label{proof}
\subsection{Main propositions used in this proof}
The first result we will prove is the next theorem, which is an $\ell_1$-ball mixture multivariate regression model selection theorem for $\ell_1$-penalized maximum likelihood conditional density estimation in the Gaussian framework.
\begin{theo}
\label{theo interm}
We observe $((X_i,Y_i))_{i=1,\ldots,n}$ with unknown conditional Gaussian mixture density $s_0$.
For all $m \in \mathbb{N}^*$, we consider the $\ell_1$-ball  $S_{m}=\{s_\xi \in S, |s_\xi|_1 \leq m \}$ and $\hat{s}_{m}$ a $\eta_{m}$-log-likelihood minimizer in $S_{m}$, for $\eta_{m} \geq 0$:
$$- \frac{1}{n} \sum_{i=1}^n \log (\hat{s}_{m} (Y_i|X_i)) \leq \inf_{s_{m} \in S_{m}} \left( - \frac{1}{n} \sum_{i=1}^n \log(s_m(Y_i|X_i)) \right) + \eta_m.$$
Assume that for all $m \in \mathbb{N}^*$, the penalty function satisfies $\text{pen}(m)= \lambda m$ with 
$$\lambda \geq \kappa\left( A_\Sigma \vee \frac{1}{a_\pi}\right)  \left( 1+4(q+1) A_\Sigma\left(A_\beta^2+\frac{\log(n)}{a_\Sigma} \right)\right) \sqrt{\frac{k}{n}} \left( 1 + q ||x||_{\text{max},n} \log(n) \sqrt{k \log(2p+1)} \right)$$
for a constant $\kappa$. Then, for all estimator $\hat{s}_{\hat{m}}$ with $\hat{m}$ such that 
$$- \frac{1}{n} \sum_{i=1}^n \log(\hat{s}_{\hat{m}}(Y_i|X_i)) + \text{pen} (\hat{m}) \leq \inf_{m \in \mathbb{N}^*} \left( - \frac{1}{n} \sum_{i=1}^n \log(\hat{s}_{m} (Y_i|X_i))+pen(m)\right) + \eta$$
for $\eta \geq 0$, it satisfies
\begin{align*}
&E(KL_n(s_0,\hat{s}_{\hat{m}}))\leq (1+\kappa^{-1}) \inf_{m\in\mathbb{N}^*} \left( \inf_{s_m \in S_m} KL_n (s_0,s_m)+ \text{pen}(m) + \eta_m \right) +\eta \\
& + \kappa^{'} \sqrt{\frac{k}{n}} \frac{e^{-\frac{1}{2}-\frac{1}{4} a_{\beta}^2 a_\Sigma} \pi^{q/2}}{(qA_\Sigma)^{q/2}} \sqrt{2q a_\pi} \\
&+ \kappa^{'} \sqrt{\frac{k}{n}} \left[ \kappa' k \left( A_\Sigma \vee \frac{1}{a_\pi} \right) \left( 1+\frac{4(q+1)}{2} A_\Sigma\left(A_\beta^2+\frac{\log(n)}{a_\sigma}\right) \right) (1+A_\beta+\tilde{A}_\Sigma)^2 \right]
\end{align*}

where $\kappa^{'}$ is a positive constant.
\end{theo}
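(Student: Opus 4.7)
The plan is to follow the classical penalized log-likelihood model selection strategy (Massart, Le Pennec--Cohen) adapted by Meynet to the $\ell_1$-ball collection $(S_m)_{m \in \mathbb{N}^*}$. Introduce the centered empirical process
\[
\bar\nu_n(t) = \frac{1}{n}\sum_{i=1}^n \log\frac{t(Y_i|X_i)}{s_0(Y_i|X_i)} + KL_n(s_0,t),
\]
so that for every $m$ and every $s_m \in S_m$, the definitions of $\hat s_{\hat m}$ (penalized minimizer up to $\eta$) and $\hat s_m$ ($\eta_m$-approximate MLE in $S_m$) yield the deterministic inequality
\[
KL_n(s_0,\hat s_{\hat m}) \leq KL_n(s_0,s_m) + \text{pen}(m) - \text{pen}(\hat m) + \eta_m + \eta + \bar\nu_n(s_m) + \sup_{t\in S_{\hat m}}\bigl(-\bar\nu_n(t)\bigr).
\]
The whole game is then to show that, on a high-probability event, the fluctuation term $\sup_{t \in S_m}(-\bar\nu_n(t))$ is absorbed by $\kappa^{-1}\bigl(KL_n(s_0,\hat s_{\hat m}) + \text{pen}(\hat m)\bigr)$ plus a residual of order $\sqrt{k/n}$.

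The second step is to control, for each $m$, the expectation $Z_m := E\bigl[\sup_{t \in S_m}(-\bar\nu_n(t))\bigr]$. By symmetrization this reduces to a Rademacher average
\[
R_m = E\Bigl[\sup_{s_\xi \in S_m} \frac{1}{n}\sum_{i=1}^n \varepsilon_i \log s_\xi(Y_i|X_i)\Bigr],
\]
and the key Lipschitz analysis consists in showing that, under the boundedness assumptions defining $\tilde\Xi_k$, the map $\xi \mapsto \log s_\xi(y|x)$ is Lipschitz in each component of $(\beta,\Sigma,\pi)$, with constants controlled by $A_\Sigma$, $A_\beta$, $a_\pi$, $a_\Sigma$, $\tilde A_\Sigma$ and the truncation of the Gaussian tails at $\log n$. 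Applying the contraction principle plus the Maurey / empirical-$\ell_1$ entropy bound on the $\ell_1$-ball of regression coefficients (which produces the $\sqrt{k\log(2p+1)}$ factor together with a factor $\|x\|_{\max,n}$ coming from the $\ell_\infty$-to-$\ell_1$ duality) yields a bound of the form
\[
R_m \leq m \cdot C_{q,k}(A_\Sigma,a_\pi,A_\beta,a_\Sigma)\, \sqrt{\tfrac{k}{n}}\bigl(1 + q\|x\|_{\max,n}\log n\sqrt{k\log(2p+1)}\bigr),
\]
together with a $m$-independent residual of order $\sqrt{k/n}$ coming from the control near $s_0$ (this is where the $\exp(-\tfrac14 a_\beta^2 a_\Sigma)$ factor enters). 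The lower bound imposed on $\lambda$ in the hypotheses is precisely $\kappa$ times the $m$-coefficient above, ensuring $\kappa R_m \leq \text{pen}(m)$.

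The third step is a Talagrand/Bousquet concentration inequality for the supremum of the empirical process on each $S_m$, which upgrades the expectation bound to an exponential deviation bound $\sup_{t\in S_m}(-\bar\nu_n(t)) \leq R_m + \sqrt{m x/n} + x/n$ with probability $1-e^{-x}$. One then performs a weighted union bound over $m \in \mathbb{N}^*$ with weights $x_m$ summable, choosing $x_m = m + \zeta$ so that the $\sum_m e^{-x_m}$ is finite and the resulting additional term can be absorbed by $\text{pen}(m)/\kappa$. Plugging the resulting high-probability control back into the deterministic inequality above and integrating produces the announced oracle inequality with leading constant $(1+\kappa^{-1})$ and the explicit remainder $\sqrt{k/n}$ times the constant factor written in the statement.

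The main obstacle is unambiguously the Rademacher/entropy calculation: establishing the Lipschitz bound for $\log s_\xi$ jointly in $(\beta,\Sigma,\pi)$ for a mixture of multivariate Gaussians (the log-sum-exp must be handled carefully so that the $k$ components contribute only through $\sqrt{k}$ and $a_\pi$), and then coupling it with the empirical $\ell_1$-ball entropy so that only $\log(2p+1)$ and $\|x\|_{\max,n}$ appear, rather than some Gram-matrix eigenvalue. This is exactly the point where the multivariate extension of Meynet's argument is delicate, because one must track the additional $q$-dependence ($q$ response coordinates and the $q\times q$ matrix $\Sigma_r$) through every step of the contraction and the Gaussian-integrability bounds used to truncate the response at scale $\log n$.
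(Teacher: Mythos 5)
Your overall strategy is the one the paper follows: the deterministic comparison inequality obtained from the definitions of $\hat s_{\hat m}$ and of the near-minimizers in each $S_m$, symmetrization of the centered empirical process, a Maurey-type $\ell_1$-ball entropy bound (Lemma \ref{lem7.3}) producing the $\|x\|_{\max,n}\sqrt{k\log(2p+1)}$ factor, a concentration inequality upgraded by a weighted union bound over models with weights $e^{-(z+m+m')}$, and integration in $z$. (The paper bounds the Rademacher average by a Dudley-type chaining sum rather than by the contraction principle, but that is a cosmetic difference.)

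There is, however, one genuine gap. The concentration inequality you invoke (Talagrand/Bousquet, or the Boucheron--Lugosi--Massart version used in the paper as Lemma \ref{Boucheron}) requires a \emph{deterministic} uniform bound $R_n$ on $\sup_{f\in F_m}\|f\|_n$, and the Lipschitz constants of $\xi\mapsto\log s_\xi(y|x)$ grow like $(|y|+A_\beta)^2$, so no such bound exists on the whole sample space. The paper therefore works on the truncation event $T=\{\max_{i,z}|Y_{i,z}|\le M_n\}$ and proves the result in two separate pieces: Proposition \ref{prop1} on $T$ (where $C_{M_n}$ plays the role of the Lipschitz constant) and Proposition \ref{prop2} on $T^c$, the latter by Cauchy--Schwarz, the crude uniform bound $KL_n(s_0,s_\xi)\le e^{-1/2}\pi^{q/2}(qA_\Sigma)^{-q/2}$, and the Gaussian tail estimate $P(T^c)\lesssim knq\,e^{-\frac12(M_n^2-2M_nA_\beta+a_\beta^2)a_\Sigma}$; one then chooses $M_n=A_\beta+\sqrt{A_\beta^2+4\log(n)/a_\Sigma}$ to make this piece of order $\sqrt{k/n}$, and it is this choice that produces both the $\log n$ in the penalty threshold and the factor $e^{-\frac12-\frac14 a_\beta^2 a_\Sigma}$ in the remainder. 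You mention truncation at scale $\log n$ but never say how the complement event is handled, and you misattribute the $e^{-\frac14 a_\beta^2 a_\Sigma}$ term to a ``control near $s_0$'' inside the Rademacher average; without the explicit split $E[KL_n\mathds{1}_T]+E[KL_n\mathds{1}_{T^c}]$ and the separate tail argument, the concentration step fails and the proof does not close.
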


It is an $\ell_1$-ball mixture regression model selection theorem for $\ell_1$-penalized maximum likelihood conditional density estimation in the Gaussian framework.
Its proof could be deduced from the two following propositions, which split the result according if the variable $Y$ is large enough or not.

\begin{prop}
\label{prop1}
We observe $((X_i,Y_i))_{i=1,\ldots,n}$, with conditional density unknown denoted by $s_0$. Let $M_n >0$, and consider the event
$$T:= \left\lbrace  \max_{i \in \{1,\ldots,n\}} \max_{z\in \{1,\ldots,q\}}  |Y_{i,z}| \leq M_n \right\rbrace.$$
For all $m \in \mathbb{N}^*$, we consider the $\ell_1$-ball  $$S_m = \{s_\xi \in S, |s_\xi|_1 \leq m\}$$
and $\hat{s}_m$ an $\eta_m$-log-likelihood minimizer in $S_m$, for $\eta_m \geq 0$:
$$- \frac{1}{n} \sum_{i=1}^n \log (\hat{s}_m (Y_i|X_i)) \leq \inf_{s_m \in S_m} \left( - \frac{1}{n} \sum_{i=1}^n \log(s_m(Y_i|X_i)) \right) + \eta_m.$$

Let $C_{M_n} = \max \left( \frac{1}{a_\pi}, A_\Sigma+\frac{1}{2} (|M_n|+A_\beta)^2 A_\Sigma^2, \frac{q(|M_n|+A_\beta)A_\Sigma}{2} \right)$.
Assume that for all $m \in \mathbb{N}^*$, the penalty function satisfies $\text{pen}(m)= \lambda m$ with 
$$\lambda \geq \kappa  \frac{4 C_{M_n}}{\sqrt{n}} \sqrt{k} \left(1+9q||x||_{\max,n} \log(n) \sqrt{k \log(2p+1)} \right)$$

for some absolute constant $\kappa$. Then, any estimate $\hat{s}_{\hat{m}}$ with $\hat{m}$ such that 
$$- \frac{1}{n} \sum_{i=1}^n \log(\hat{s}_{\hat{m}}(Y_i|X_i)) + \text{pen} (\hat{m}) \leq \inf_{m \in \mathbb{N}^*} \left( - \frac{1}{n} \sum_{i=1}^n \log(\hat{s}_{m} (Y_i|X_i))+\text{pen}(m)\right) + \eta$$
for $\eta \geq 0$, satisfies

\begin{align*}
 E(KL_n(s_0, \hat{s}_{\hat{m}}) \mathds{1}_\mathcal{T}) &\leq (1+\kappa^{-1}) \inf_{m \in \mathbb{N}^*} \left( \inf_{s_m \in S_m} KL_n(s_0,s_m) +\text{pen}(m) + \eta_m \right) \\
 &+ \frac{\kappa^{'} k^{3/2} q C_{M_n}}{\sqrt{n}} (1+(A_\beta+\tilde{A}_\Sigma)^2)
\end{align*}

where $\kappa^{'}$ is an absolute positive constant.
\end{prop}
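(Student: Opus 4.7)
The plan is to view the $\ell_1$-penalized MLE $\hat s_{\hat m}$ as the outcome of a structural risk minimization procedure over the nested collection of $\ell_1$-balls $(S_m)_{m\in\mathbb{N}^*}$, and to deduce the oracle inequality from a generic Massart-type model selection theorem for maximum likelihood estimation, in the spirit of \cite{MassartMeynet,Meynet}. The core requirement of any such theorem is a uniform control, model by model, of the centered empirical log-likelihood process
\[
\nu_n(s_m) \;=\; \frac{1}{n}\sum_{i=1}^n \Bigl[\log\tfrac{s_0(Y_i|X_i)}{s_m(Y_i|X_i)} - \mathbb{E}\log\tfrac{s_0(Y_i|X_i)}{s_m(Y_i|X_i)}\Bigr],
\]
together with a weight sequence $(x_m)$ satisfying $\sum_m e^{-x_m}<\infty$. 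On the event $\mathcal T$ the summands are bounded, which opens the door to Talagrand/Bousquet concentration. The final oracle inequality will come by plugging the bound on $\mathbb{E}\sup_{s_m\in S_m}|\nu_n(s_m)|$ into the standard peeling argument for penalized contrasts.

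The first technical block I would carry out is a Lipschitz analysis of $\xi\mapsto \log s_\xi(y|x)$ on the restricted parameter set $\tilde\Xi_k$, under the constraint $\|y\|_\infty\le M_n$ imposed by $\mathcal T$. Using the explicit mixture form, writing $s_\xi$ as a sum and bounding the weighted log by $\frac{1}{a_\pi}\sum_r \pi_r|\log \phi_r|$ plus a term from the mixture weights, one gets that each summand $\log s_\xi/s_0$ is bounded pointwise by a constant of order $C_{M_n}$, and is Lipschitz in the regression coefficients $(\beta_{r,j,z})$ with constant of order $q(|M_n|+A_\beta)A_\Sigma$. This is exactly what makes $C_{M_n}$ appear in the penalty.

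The second block is the control of $\mathbb{E}\sup_{s_m\in S_m}|\nu_n(s_m)|$ by the Rademacher complexity of the class of log-densities indexed by the $\ell_1$-ball of radius $m$. Symmetrization reduces the problem to
\[
\mathbb{E}\sup_{|\xi|_1\le m}\Bigl|\frac{1}{n}\sum_{i=1}^n\varepsilon_i\,\log s_\xi(Y_i|X_i)\Bigr|,
\]
which, by the Lipschitz contraction principle applied coordinate by coordinate on $\beta$, is bounded by $C_{M_n}\cdot m\cdot \mathbb{E}\|n^{-1}\sum_i\varepsilon_i X_i\|_\infty$. The standard maximal inequality on sub-Gaussian averages then yields the factor $\|x\|_{\max,n}\sqrt{\log(2p+1)/n}$; the $\sqrt k$ comes from summing over mixture components, and the extra $\log n$ from integrating the tail of $Y$ outside $\mathcal T$ via a truncation at level $M_n$ of order $\log n/\sqrt{a_\Sigma}$. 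Chaining in the spirit of Dudley closes this estimate and produces the announced dependence on $\lambda$. A small wrinkle here is that $\log s_\xi$ is not linear in $\xi$ because of the mixture, so the contraction step must be preceded by the Lipschitz reduction of the previous paragraph.

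Finally I would assemble the pieces: choose weights $x_m = m$ so that $\sum_m e^{-x_m}<\infty$; use the concentration bound for $\nu_n$ on $\mathcal T$ together with the Rademacher bound to verify the hypothesis of the Massart-type theorem with the prescribed $\lambda$ as lower bound on the penalty; and run the peeling argument to obtain the oracle inequality restricted to $\mathcal T$ by inserting the indicator $\mathds{1}_{\mathcal T}$ through the whole derivation. The absolute constant $\kappa$ absorbs the numerical constants coming from Bousquet's inequality and the chaining step, while $\kappa'$ collects the residual terms of the model selection theorem. The main obstacle, as I see it, is the entropy/Lipschitz reduction in the second block: one has to pass from the non-linear class $\{\log s_\xi\}$ to a linear class in $\beta$ without paying a dimension factor, which forces careful use of $a_\pi$, the eigenvalue bounds $(a_\Sigma,A_\Sigma)$, and of the truncation level $M_n$ (this is what dictates the precise form of $C_{M_n}$).
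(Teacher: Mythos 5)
Your overall architecture coincides with the paper's: truncation on $\mathcal{T}$, a Lipschitz bound on $\xi\mapsto\log s_\xi(y|x)$ over $\tilde{\Xi}_k$ producing exactly the constant $C_{M_n}$ (Lemma \ref{lemmefastoche}), symmetrization, a concentration inequality for the supremum of the bounded empirical process (the paper uses the Boucheron--Lugosi--Massart inequality, Lemma \ref{Boucheron}, rather than Bousquet, but this is cosmetic), weights $t=z+m+m'$ so that $\sum_{m,m'}e^{-m-m'}\le 1$, and integration in $z$. The one step where you genuinely depart from the paper is the bound on the Rademacher complexity of $F_m$. The paper runs Dudley's entropy integral (Lemma \ref{dernierLemme}) with an explicit packing-number bound (Lemma \ref{entropie}) obtained by a Maurey-type discretization of the $\ell_1$-ball (Lemma \ref{lem7.3}) together with finite-dimensional coverings of the $(\Sigma_r,\pi_r)$ blocks; the chaining sum is truncated at $S=\log n/\log 2$, which is where the $\log n$ in $\Delta_m$ and in the penalty comes from (not, as you write, from the tail of $Y$ outside $\mathcal{T}$ --- that tail only enters Proposition \ref{prop2} and the choice of $M_n$). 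Your route --- contraction to a linear class followed by the sub-Gaussian maximal inequality over the $2p$ vertices of the $\ell_1$-ball, giving $m\,\mathbb{E}\|n^{-1}\sum_i\varepsilon_iX_i\|_\infty\lesssim m\|x\|_{\max,n}\sqrt{\log(2p+1)/n}$ --- would, if completed, even save a $\log n$ factor, and remains compatible with the stated lower bound on $\lambda$.

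However, the contraction step is where your argument currently has a real gap. The Ledoux--Talagrand contraction principle applies to classes of the form $\{(\varphi_i(\langle\beta,x_i\rangle))_{i}\}$ with $\varphi_i$ a fixed scalar Lipschitz map of a single linear form. Here $\log s_\xi(Y_i|x_i)$ is a joint, non-separable function of the $kq$ linear forms $(\beta_{r,z}x_i)_{r,z}$ and, in addition, of $(\Sigma_r,\pi_r)_r$; knowing that it is $C_{M_n}$-Lipschitz in the $\ell_1$-norm of the parameter vector does not by itself license a coordinatewise contraction. To make your step rigorous you would need either a vector-valued contraction inequality (Maurey/Pisier type, at the cost of explicit constants and a factor depending on $kq$) or to fall back on the discretization the paper uses. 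You also leave the $\Sigma$ and $\pi$ directions out of the complexity bound entirely; they are finite-dimensional and bounded, so they only contribute the additive term $6(1+k(A_\beta+\tilde{A}_\Sigma))$ in $\Delta_m$, but they must appear for the final residual $\kappa'k^{3/2}qC_{M_n}(1+(A_\beta+\tilde{A}_\Sigma)^2)/\sqrt{n}$ to come out as stated.
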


\begin{prop}
\label{prop2}
Let $s_0,\mathcal{T}$ and $\hat{s}_{\hat{m}}$ defined as in the previous proposition. We assume that the conditional density $s_0$ of $((X_i,Y_i))_{i=1, \ldots,n}$ is a mixture of Gaussian conditional densities. Then,
$$E(KL_n (s_0,\hat{s}_{\hat{m}}) \mathds{1}_{T^c}) \leq \frac{e^{-1/2} \pi^{q/2}}{(q A_\Sigma)^{q/2}} \sqrt{2knqa_\pi} e^{-1/4 (M_n^2 -2M_nA_\beta+a_\beta^2)a_\Sigma}.$$
\end{prop}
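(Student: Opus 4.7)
My plan is to split the expectation using Cauchy--Schwarz, so that
\begin{equation*}
E\bigl[KL_n(s_0,\hat{s}_{\hat{m}})\, \mathds{1}_{T^c}\bigr] \leq \sqrt{E\bigl[KL_n^2(s_0,\hat{s}_{\hat{m}})\bigr]}\; \sqrt{P(T^c)} .
\end{equation*}
This reduces the proof to two independent tasks: a deterministic uniform bound on $KL_n(s_0,s)$ for $s\in S$ (which controls the second moment because $\hat s_{\hat m} \in S$), and a Gaussian-tail bound on the probability of $T^c$. The square-root produced by Cauchy--Schwarz is precisely what generates the factor $\sqrt{nq}$ in the stated bound and the coefficient $1/4$ (rather than $1/2$) in the exponent.

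For the probability of $T^c$, I would use the union bound together with the fact that each marginal $Y_{i,z}\mid X_i=x_i$ is a one-dimensional mixture of $k$ Gaussians with means $(\beta_r x_i)_z$ bounded in absolute value by $A_\beta$ and variances $(\Sigma_r)_{zz}$ controlled via the entrywise bounds $a_\Sigma \leq \|\Sigma_r^{-1}\|_{\min}$. Applying the classical one-dimensional Gaussian tail inequality componentwise and then summing over the $k$ mixture components (all weights bounded below by $a_\pi$) yields a bound of the form
\begin{equation*}
P(T^c) \leq \sum_{i=1}^n \sum_{z=1}^q P(|Y_{i,z}|>M_n \mid X_i=x_i) \leq C\, nq\, \exp\!\Bigl(-\tfrac{a_\Sigma}{2}(M_n^2 - 2 M_n A_\beta + a_\beta^2)\Bigr) ;
\end{equation*}
taking the square root produces the exponential factor appearing in the statement, together with the $\sqrt{nq}$ prefactor.

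For $E[KL_n^2]$, since both $s_0$ and $\hat{s}_{\hat m}$ lie in $S$ with parameters confined to $\tilde\Xi_k$, I would bound $KL_n(s_0,s)$ uniformly in $s\in S$ by a deterministic constant. The convexity inequality
\begin{equation*}
KL\Bigl(\sum_r \pi_r p_r,\, \sum_r \pi_r' q_r\Bigr) \leq \sum_r \pi_r\, KL(p_r,q_r) + \text{mismatch in weights},
\end{equation*}
combined with the closed-form Gaussian KL
\begin{equation*}
KL(N(\mu_1,\Sigma_1), N(\mu_2,\Sigma_2)) = \tfrac12\!\left[\log\det(\Sigma_2\Sigma_1^{-1}) + \operatorname{tr}(\Sigma_2^{-1}\Sigma_1) + (\mu_1-\mu_2)^T\Sigma_2^{-1}(\mu_1-\mu_2) - q\right],
\end{equation*}
together with the constraints on $\beta_r$, $\Sigma_r$ and $\pi_r$, yields $KL_n(s_0,s)\leq C(q,a_\pi,a_\Sigma,A_\Sigma,A_\beta,\tilde a_\Sigma,\tilde A_\Sigma)$. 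The prefactor $\tfrac{e^{-1/2}\pi^{q/2}}{(qA_\Sigma)^{q/2}}\sqrt{2 k a_\pi}$ in the target bound is what emerges after writing this constant explicitly and combining it with $\sqrt{P(T^c)}$.

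The main obstacle is the KL-moment step rather than the tail step. Two subtle points need to be handled: first, the mixture weights of $s_0$ and $\hat s_{\hat m}$ need not coincide, so the pure convexity inequality must be supplemented by a term controlled via the lower bound $a_\pi$ on the weights; second, the covariance matrices are constrained only entrywise (through $a_\Sigma, A_\Sigma, \tilde a_\Sigma, \tilde A_\Sigma$), so bounding the quadratic form $(\mu_1-\mu_2)^T\Sigma_2^{-1}(\mu_1-\mu_2)$ and the log-determinant requires translating entrywise bounds into spectral ones (via $\operatorname{tr}$ and Cauchy--Schwarz). Once these are handled, the rest is bookkeeping to produce the explicit constants displayed in the statement.
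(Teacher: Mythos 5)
Your decomposition is exactly the paper's: Cauchy--Schwarz to split off $\sqrt{P(T^c)}$, a union bound over $i$, $z$ and the $k$ mixture components with a one-dimensional Gaussian tail estimate for $P(T^c)$ (this is where the $\sqrt{nq}$ and the $1/4$ in the exponent come from, as you say), and a deterministic uniform bound on $KL_n(s_0,s)$ over $s\in S$ to control the second moment. Where you genuinely diverge is the KL-moment step. The paper does not use convexity of the Kullback--Leibler divergence nor the closed-form Gaussian KL: it writes $KL(s_0(\cdot|x),s_\xi(\cdot|x))\leq -\int \log(s_\xi(y|x))\,s_0(y|x)\,dy$, lower-bounds the integrand pointwise using the parameter bounds, recognizes a standard Gaussian expectation after the change of variables $u=\sqrt{2qA_\Sigma}\,y$, and concludes with the elementary inequality $t\log t\geq -e^{-1}$; this is precisely what produces the specific constant $e^{-1/2}\pi^{q/2}/(qA_\Sigma)^{q/2}$. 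Your route (chain-rule/convexity bound for mixtures with mismatched weights, plus the explicit Gaussian KL formula and entrywise-to-spectral translation of the covariance bounds) is viable and arguably more transparent --- the weight mismatch is handled by the standard latent-variable argument at the cost of an extra $\log(1/a_\pi)$ term, and all remaining quantities are bounded by the constraints of $\tilde\Xi_k$ --- but it will \emph{not} reproduce the displayed prefactor: you would obtain a bound of the same form with a different explicit constant (something like $\tfrac{q}{2}\log(q\tilde A_\Sigma A_\Sigma)+\tfrac{q^2}{2}A_\Sigma\tilde A_\Sigma + 2qA_\Sigma A_\beta^2+\log(1/a_\pi)$ in place of $e^{-1/2}\pi^{q/2}/(qA_\Sigma)^{q/2}$). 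That is harmless for the downstream use of the proposition in Theorem \ref{theo interm}, but you should not claim, as you do, that your constant "emerges" as the one in the statement; if you want the literal inequality as stated you need the paper's direct lower-bounding argument.
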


\subsection{Notations}
\label{notations}
To prove those two propositions, and the theorem, begin with some notations.

For any measurable function $g: \mathbb{R}^q \mapsto \mathbb{R}$ , we consider the empirical norm
$$g_n:= \sqrt{\frac{1}{n}\sum_{i=1}^n g^2(Y_i|X_i)} ;$$
its conditional expectation
$$E_X(g)=E(g(.|X)|X=x) = \int_{\mathbb{R}^q} g(y|x)s_0(y|x)dy ;$$
its empirical processus
$$P_n(g):=\frac{1}{n} \sum_{i=1}^n g(Y_i|X_i) ;$$
and its normalized processus 
$$\nu_n(g):= P_n(g)-E_X(P_n(g)) = \frac{1}{n} \sum_{i=1}^n \left[ g(Y_i | X_i) - \int_{\mathbb{R}^q} g(y|x_i) s_0(y|x_i) dy \right].$$

For all $m \in \mathbb{N}^*$, for all model $S_m$, we define $$F_m=\left\lbrace f_m = - \log \left(\frac{s_m}{s_0}\right) , s_m \in S_m \right\rbrace.$$
Let $\delta_{KL} > 0$. For all $m \in \mathbb{N}^*$, let $\eta_m \geq 0$. There exist two functions, denoted by  $\hat{s}_{\hat{m}}$ and $\bar{s}_m$, belonging to $S_m$, such that 

$$ P_n (- \log(\hat{s}_{\hat{m}})) \leq \inf_{s_m \in S_m} P_n(-\log(s_m)) + \eta_m ;$$
\begin{equation}
\label{KLn}
KL_n(s_0,\bar{s}_m) \leq \inf_{s_m \in S_m} KL_n(s_0,s_m) + \delta_{KL}.
\end{equation}

Denote by $\hat{f}_m:=-\log\left( \frac{\hat{s}_m}{s_0} \right)$ and $\bar{f}_m:=-\log\left( \frac{\bar{s}_m}{s_0} \right)$.
Let $\eta \geq 0$ and fix $m \in \mathbb{N}^*$. We define
$$M(m)=\left\lbrace m' \in \mathbb{N}^* | P_n(-\log(\hat{s}_{m'})) + \text{pen} (m') \leq P_n(-\log(\hat{s}_m)) + \text{pen} (m) + \eta \right\rbrace.$$

\subsection{Proof of the theorem \ref{theo interm} thanks to the propositions \ref{prop1} and \ref{prop2}}
Let $M_n >0$ and $ \kappa \geq 36$. Let $C_{M_n}=\max \left(\frac{1}{a_\pi} , A_\Sigma + \frac{1}{2} (|M_n|+A_\beta)^2 A_\Sigma^2  , \frac{q(|M_n|+A_\beta)A_\Sigma}{2} \right)$.
Assume that, for all $m \in \mathbb{N}^*$, $\text{pen}(m)=\lambda m$, with
$$\lambda \geq \kappa C_{M_n} \sqrt{\frac{k}{n}} \left( 1 + q ||x||_{\text{max,n}} \log(n) \sqrt{k \log(2p+1)} \right).$$
We derive from the two propositions that there exists $\kappa '$ such that, if $\hat{m}$ satisfies
$$- \frac{1}{n} \sum_{i=1}^n \log(\hat{s}_{\hat{m}}(Y_i|X_i)) + \text{pen} (\hat{m}) \leq \inf_{m \in \mathbb{N}^*} \left( - \frac{1}{n} \sum_{i=1}^n \log(\hat{s}_{m} (Y_i|X_i))+\text{pen}(m)\right) + \eta ;$$
then $\hat{s}_{\hat{m}}$ satisfies 
\begin{align*}
E(KL_n(s_0,\hat{s}_{\hat{m}}))&=E(KL_n(s_0,\hat{s}_{\hat{m}}) \mathds{1}_T)+E(KL_n(s_0,\hat{s}_{\hat{m}}) \mathds{1}_{T^c}) \\
&\leq (1+\kappa^{-1}) \inf_{m\in\mathbb{N}^*} \left( \inf_{s_m \in S_m} KL_n (s_0,s_m)+ \text{pen}(m) + \eta_m \right) \\
&+ \kappa' \frac{C_{M_n}}{\sqrt{n}}k^{3/2}q \left( 1+(A_\beta+\tilde{A}_\Sigma)^2 \right) + \eta \\
&+ \kappa' \frac{e^{-\frac{1}{2}-\frac{1}{4} a_{\beta}^2 a_\Sigma} \pi^{q/2}}{(qA_\Sigma)^{q/2}} \sqrt{2knq a_\pi} e^{-1/4 (M_n^2 -2M_nA_\beta)a_\Sigma}.
\end{align*}

In order to optimize this equation with respect to $M_n$, we consider $M_n$ the positive solution of
$$\log(n) -\frac{1}{4} (X^2 -2X A_\beta) a_\Sigma=0 ;$$
we obtain $M_n = A_\beta + \sqrt{A_\beta^2 +\frac{4 \log(n)}{a_\Sigma}}$ and $\sqrt{n} e^{-1/4 (M_n^2 -2M_nA_\beta)a_\Sigma} = \frac{1}{\sqrt{n}}$.

On the other hand,
\begin{align*}
C_{M_n}  &\leq \left( A_\Sigma \vee \frac{1}{a_\pi} \right) \left[ 1+\frac{q+1}{2} A_\Sigma(M_n+A_\beta)^2 \right] \\
&\leq \left( A_\Sigma \vee \frac{1}{a_\pi} \right) \left[ 1+4(q+1) A_\Sigma \left(A_\beta^2+\frac{\log(n)}{a_\Sigma} \right) \right]. 
\end{align*}
We obtain
\begin{align*}
E(KL_n(s_0,\hat{s}_{\hat{m}}))&=E(KL_n(s_0,\hat{s}_{\hat{m}}) \mathds{1}_T)+E(KL_n(s_0,\hat{s}_{\hat{m}}) \mathds{1}_{T^c})  \\
&\leq (1+\kappa^{-1}) \inf_{m\in\mathbb{N}^*} \left( \inf_{s_m \in S_m} KL_n (s_0,s_m)+ \text{pen}(m) + \eta_m \right) +\eta \\
& + \kappa'\sqrt{\frac{k}{n}} \left[ \frac{e^{-\frac{1}{2}-\frac{1}{4} a_{\beta}^2 a_\Sigma} \pi^{q/2}}{(qA_\Sigma)^{q/2}} \sqrt{2q a_\pi} \right. \\
&+ \left. \left( A_\Sigma \vee \frac{1}{a_\pi} \right) \left( 1+4(q+1) A_\Sigma\left(A_\beta^2+\frac{\log(n)}{a_\Sigma} \right)\right) k(1+(A_\beta+\tilde{A}_\Sigma)^2 ) \right] .
\end{align*}

\subsection{Proof of the theorem \ref{inegOracle}}

Let $\lambda >0$. Let $\hat{m}=\inf \{ m \in \mathbb{N} | \hat{s}(\lambda) \in S_{\hat{m}} \} = \lceil | \hat{s}(\lambda) |_1 \rceil $. Then,
as $\hat{s}(\lambda)$ is the Lasso estimator, and as $S_m = \{s_\xi \in S: |s_\xi|_1 \leq m \}$, we could write
\begin{align*}
-\frac{1}{n} \sum_{i=1}^{n} \log(\hat{s}(\lambda) (Y_i|X_i)) + \lambda \hat{m} &\leq -\frac{1}{n} \sum_{i=1}^{n} \log(\hat{s}(\lambda) (Y_i|X_i)) + \lambda |\hat{s}(\lambda)|_1 + \lambda \\
 &= \inf_{s_\xi \in S_m} \left\{ -\frac{1}{n} \sum_{i=1}^{n} \log(s_\xi) (Y_i|X_i) + \lambda |s_\xi|_1 \right\} + \lambda \\
 &=\inf_{m \in \mathbb{N}^*} \inf_{s_\xi, |\xi|_1 \leq m} \left\{- \frac{1}{n} \sum_{i=1}^{n} \log(s_\xi) (Y_i|X_i) + \lambda |s_\xi|_1 \right\} + \lambda \\
 &=\inf_{m \in \mathbb{N}^*} \inf_{s_\xi, |\xi|_1 \leq m} \left\{ -\frac{1}{n} \sum_{i=1}^{n} \log(s_m) (Y_i|X_i) + \lambda m \right\} + \lambda . \\
\end{align*}
Taking $\text{pen}(m)=\lambda m$, $\eta=\lambda$, and $\hat{s}_m$ such that
$$ -\frac{1}{n} \sum_{i=1}^n  (\hat{s}_m(Y_i|X_i)) \leq \inf_{s_m \in S_m} \left( -\frac{1}{n} \sum_{i=1}^n\log(s_m (Y_i|X_i)) \right)$$
with $\eta_m=0$; then, $\hat{s}(\lambda)$ satisfies
$$ -\frac{1}{n} \sum_{i=1}^n \log(\hat{s}_{\hat{m}}(Y_i|X_i)) + \text{pen}(\hat{m}) \leq \inf_{m \in \mathbb{N}} \left( -\frac{1}{n} \sum_{i=1}^n\log(s_m (Y_i|X_i)) + \text{pen}(m) \right) + \eta.$$

All assumptions of the theorem \ref{theo interm} are satisfied, which leads to the oracle inequality.
\section{Proofs of propositions \ref{prop1} and \ref{prop2}}
\label{proofProposition}

\subsection{Proof of the proposition \ref{prop1}}

In this proposition, we will prove the main theorem according to the event $T$.
For that, we need some preliminary results.

From our notations, reminded in section \ref{notations}, we have, for all $m\in \mathbb{N}^*$ for all $m' \in M(m)$,
\begin{align}
\label{contexte}
 P_n(\hat{f}_{m'}) + \text{pen} (m') &\leq P_n(\hat{f}_m)+\text{pen}(m)+\eta \leq P_n(\bar{f}_m) + \text{pen} (m) +\eta_m +\eta ; \nonumber \\
 E_X(P_n(\hat{f}_{m'}))+\text{pen}(m') &\leq E_X(P_n(\bar{f}_m))+\text{pen}(m) +\eta_m+\eta +\nu_n(\bar{f}_m) -\nu_n(\hat{f}_{m'}) ; \nonumber \\
KL_n(s_0,\hat{s}_{m'})+\text{pen}(m') &\leq \inf_{s_m \in S_m} KL_n(s_0,s_m) +\delta_{KL} +\text{pen}(m) +  \eta_m + \eta + \nu_{n} (\bar{f}_m) - \nu_{n}(\hat{f}_{m'}); 
\end{align}

thanks to the inequality \eqref{KLn}.

The goal is to bound $-\nu_{n}(\hat{f}_{m'})= \nu_{n}(-\hat{f}_{m'})$.

To control this term, we use the following lemma.

\begin{lem}
\label{megalemme}
Let $M_n >0$. Let $$T=\left\lbrace \max_{i \in \{1,\ldots,n\}} \left( \max_{z\in \{1,\ldots,q\}} |Y_{i,z}|\right) \leq M_n \right\rbrace.$$
Let $C_{M_n}=\max \left(\frac{1}{a_\pi} , A_\Sigma + \frac{1}{2} (|M_n|+A_\beta)^2 A_\Sigma^2  , \frac{q(|M_n|+A_\beta)A_\Sigma}{2} \right)$ and 
$$\Delta_{m'} =m' ||x||_{\max,n} \log (n) \sqrt{k \log(2p+1)} + 6(1+k(A_\beta+\tilde{A}_\Sigma)).$$
Then, on the event $T$, for all $m' \in \mathbb{N}^*$, for all $t>0$, with probability greater than $1-e^{-t}$, 
$$\sup_{f_{m'} \in \mathcal{F}_{m'}} |\nu_{n}(-f_{m'})| \leq  \frac{4 C_{M_n}}{\sqrt{n}} \left( 9 \sqrt{k} q \Delta_{m^{'}}+\sqrt{2} \sqrt{t}(1+k(A_\beta +\tilde{A}_\Sigma)) \right)$$
\end{lem}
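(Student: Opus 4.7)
\textbf{Proof plan for Lemma \ref{megalemme}.} The plan is to apply a Bousquet–Talagrand concentration inequality to the empirical process indexed by $\mathcal{F}_{m'}$, and to bound the expected supremum by a symmetrisation/chaining argument tailored to the $\ell_1$-ball structure of $S_{m'}$. The event $T$ plays the role of the truncation needed to turn Gaussian tails into uniform $L^\infty$ bounds on the log-likelihood ratios.

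First I would show that on $T$, the class $\mathcal{F}_{m'}$ consists of uniformly bounded functions. Writing $-\log(s_\xi(y|x)/s_0(y|x))$ as the difference of two Gaussian log-densities and using the a priori bounds $a_\pi\leq \pi_r$, $a_\Sigma\leq \|\Sigma_r^{-1}\|\leq A_\Sigma$, $a_\beta\leq \|\beta_r x\|\leq A_\beta$ together with $|Y_{i,z}|\leq M_n$, one gets the pointwise bound
\[
\|-f_{m'}\|_\infty \;\lesssim\; C_{M_n}\bigl(1+k(A_\beta+\tilde A_\Sigma)\bigr),
\]
and the same quantity also controls the variance $\sup_{f_{m'}} E_X[f_{m'}^2]$. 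This is exactly where the definition of $C_{M_n}$ becomes relevant: each of its three terms bounds one of the three contributions to $\log s_\xi$ (the $\log\pi_r$ term, the quadratic form $(y-\beta_r x)^t\Sigma_r^{-1}(y-\beta_r x)$, and the $\log\det\Sigma_r$ term).

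Next I would invoke Bousquet's inequality for the supremum of the bounded empirical process $\{\nu_n(-f_{m'}):f_{m'}\in\mathcal{F}_{m'}\}$, which yields, with probability $\geq 1-e^{-t}$,
\[
\sup_{f_{m'}\in\mathcal{F}_{m'}}|\nu_n(-f_{m'})|\;\leq\; E\bigl[\sup_{f_{m'}}|\nu_n(-f_{m'})|\bigr] \;+\; \sqrt{\tfrac{2\sigma^2 t}{n}} \;+\; \tfrac{b\,t}{3n},
\]
with $b,\sigma^2\lesssim C_{M_n}(1+k(A_\beta+\tilde A_\Sigma))$. The deviation part of this inequality is what produces the $\sqrt{2t}\,(1+k(A_\beta+\tilde A_\Sigma))/\sqrt{n}$ term in the statement, after absorbing constants into $C_{M_n}$.

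The main obstacle, and the bulk of the work, is the chaining bound on $E[\sup|\nu_n(-f_{m'})|]$. I would parameterise $\xi\mapsto -\log(s_\xi/s_0)$ and use a Lipschitz-type estimate of the form
\[
\bigl|\log s_\xi(y|x)-\log s_{\tilde\xi}(y|x)\bigr|\;\lesssim\; C_{M_n}\Bigl(\|\pi-\tilde\pi\|_1+\sum_r\|\Sigma_r^{-1}-\tilde\Sigma_r^{-1}\|+q\,(M_n+A_\beta)\|x\|_{\max}\,\|\beta-\tilde\beta\|_1\Bigr),
\]
valid on $T$, combined with the condition $|s_\xi|_1\leq m'$ on the regression part. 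By symmetrisation the expected supremum is controlled by a Rademacher average, and a Dudley/Maurey-type bound for the $\ell_1$-ball of radius $m'$ in $\mathbb{R}^{pqk}$ contributes the $m'\|x\|_{\max,n}\sqrt{k\log(2p+1)}$ factor, while the $k$ mixture components and $q$ response coordinates contribute the $\sqrt{k}\,q$ multiplier, and the Gaussian truncation via $M_n$ contributes the $\log n$ factor. The remaining summands $\pi,\Sigma$ live in a compact finite-dimensional set, which only adds an $O(1+k(A_\beta+\tilde A_\Sigma))$ term after tracking constants. Bookkeeping the numerical factors gives the bound $\tfrac{36\,C_{M_n}}{\sqrt n}\sqrt{k}\,q\,\Delta_{m'}$, so that with $\kappa\geq 36$ we get the announced $\frac{4C_{M_n}}{\sqrt n}\cdot 9\sqrt k\,q\,\Delta_{m'}$. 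Summing this expectation term with the Bousquet deviation term yields exactly the inequality stated in the lemma.
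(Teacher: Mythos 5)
Your plan follows essentially the same route as the paper's proof: a concentration inequality for the supremum of the empirical process (uniformly bounded on $T$), symmetrisation to a Rademacher average, and a Dudley-type entropy bound in which the $\ell_1$-ball constraint on $\beta$ is handled by a Maurey-type argument yielding covering numbers of order $(2p+1)^{\cdot/\delta^2}$, while $\pi$ and $\Sigma$ only contribute finite-dimensional covering terms; the uniform bound $R_n=2C_{M_n}(1+k(A_\beta+\tilde{A}_\Sigma))$ is obtained exactly as you indicate, by bounding the partial derivatives of $\log s_\xi$ in the three blocks of parameters, each block matching one term of $C_{M_n}$. Two points need repair. First, the paper does not use Bousquet's inequality but a simpler deviation bound (Lemma \ref{Boucheron}) of the form $E[\sup|\nu_n|]+2\sqrt{2}R_n\sqrt{t/n}$, with no variance term and, crucially, no $bt/(3n)$ term: with Bousquet as you wrote it, the conclusion would carry an extra contribution linear in $t$, so the lemma as stated, whose deviation part is exactly $2\sqrt{2}R_n\sqrt{t/n}=\frac{4C_{M_n}}{\sqrt{n}}\sqrt{2}\sqrt{t}\,(1+k(A_\beta+\tilde{A}_\Sigma))$, would not follow verbatim from your concentration step. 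Second, the $\log n$ factor does not come from the Gaussian truncation at $M_n$ (which enters only through $C_{M_n}$); it comes from truncating the chaining sum at depth $S=\log(n)/\log(2)$, chosen to balance the remainder $2^{-S}R_n$ against the entropy sum. Neither point derails the overall argument, but both must be fixed for the $t$-dependence and the constants to match the statement.
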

\begin{proof}
 Page \pageref{proofMegaLemme}
\end{proof}

From \eqref{contexte},
on the event $\mathcal{T}$, for all $ m \in \mathbb{N}^*$, for all $  m' \in M(m)$, for all $t >0$, with probability greater than $1-e^{-t}$,
\begin{align*}
KL_n(s_0,\hat{s}_{m'}) +  \text{pen} (m') &\leq \inf_{s_m \in S_m} KL_n(s_0,s_m) + \delta_{KL} +\text{pen} (m)+\nu_n(\bar{f}_m)\\
& + \frac{4 C_{M_n}}{\sqrt{n}} \left( 9 \sqrt{k} q \Delta_{m^{'}}+\sqrt{2} \sqrt{t}(1+k(A_\beta +\tilde{A}_\Sigma) \right)  + \eta_m + \eta\\
& \leq \inf_{s_m \in S_m} KL_n(s_0,s_m) +  \text{pen} (m) + \nu_{n}(\bar{f}_m) \\
&+ 4 \frac{C_{M_n}}{\sqrt{n}} \left( 9\sqrt{k}q\Delta_{m^{'}}+\frac{1}{2\sqrt{k}}(1+k(A_\beta+\tilde{A}_{\Sigma}))^2 +\sqrt{k} t \right)\\
&+ \eta_m + \eta + \delta_{KL},
\end{align*}

the last inequality being true because $2 a b \leq \frac{1}{\sqrt{k}} a^2 + \sqrt{k} b^2$.
Let $z>0$ such that $t=z+m+m'$. On the event $\mathcal{T}$, for all $ m \in \mathbb{N}$, for all $m' \in M(m)$, with probability greater than $1-e^{-(z+m+m')}$,
\begin{align*}
KL_n(s_0,\hat{s}_{m'}) +  \text{pen} (m')& \leq \inf_{s_m \in S_m} KL_n(s_0,s_m) +  \text{pen} (m) + \nu_{n}(\bar{f}_m)\\
&+  4 \frac{C_{M_n}}{\sqrt{n}} \left( 9\sqrt{k}q \Delta_{m^{'}} + \frac{1}{2 \sqrt{k}} (1+k(A_\beta+\tilde{A}_\Sigma))^2 +\sqrt{k}(z+m+m^{'}) \right)\\
&+ \eta_m + \eta + \delta_{KL} .
\end{align*}

\begin{align*}
KL_n(s_0,\hat{s}_{m'}) - \nu_{n}(\bar{f}_m) & \leq \inf_{s_m \in S_m} KL_n(s_0,s_m) + \text{pen}(m) + 4\frac{C_{M_n}}{\sqrt{n}}\sqrt{k} m \\
&+  \left[ \frac{4 C_{M_n}}{\sqrt{n}} \sqrt{k}(m^{'}+9 q\Delta_{m^{'}})-\text{pen}(m^{'}) \right] \\
&+ \frac{4 C_{M_n}}{\sqrt{n}} \left( \frac{1}{2 \sqrt{k}}(1+k (A_\beta+\tilde{A}_{\Sigma}))^2 +\sqrt{k}z \right) +\eta_m +\eta +\delta_{KL}
\end{align*}

Let $\kappa \geq 1$, and assume that $ \text{pen} (m)=\lambda m$ with
$$\lambda \geq \frac{4 C_{M_n}}{\sqrt{n}} \sqrt{k} \left(1+9q||x||_{max,n} \log(n) \sqrt{k \log(2p+1)} \right)$$

Then
\begin{align*}
KL_n(s_0,\hat{s}_{m'}) - \nu_{n}(\bar{f}_m)
& \leq \inf_{s_m \in S_m} KL_n(s_0,s_m) + (1+\kappa^{-1})  \text{pen} (m)  \\
&+ \frac{4 C_{M_n}}{\sqrt{n}} \left( \frac{1}{2 \sqrt{k}} (1+k(A_\beta+\tilde{A}_\Sigma))^2 +54\sqrt{k}q(1+k(A_\beta+\tilde{A}_\Sigma)) +\sqrt{k}z\right)
&+ \eta + \delta_{KL}  + \eta_m\\
&\leq  \inf_{s_m \in S_m} KL_n(s_0,s_m) + (1+\kappa^{-1})  \text{pen} (m)  \\
&+ \frac{4 C_{M_n}}{\sqrt{n}} \left( 27 k^{3/2} + \frac{1}{\sqrt{k}} (1+ k(A_\beta + \tilde{A}_\Sigma))^2 (\frac{1}{2}+27) + \sqrt{k}z \right)\\
&+ \eta_m +\eta +\delta_{KL}.
\end{align*}

Let $\hat{m}$ such that
$$- \frac{1}{n} \sum_{i=1}^n \log(\hat{s}_{\hat{m}}(Y_i|X_i)) +  \text{pen} (\hat{m}) \leq \inf_{m \in \mathbb{N}^*} \left( - \frac{1}{n} \sum_{i=1}^n \log(\hat{s}_{m} (Y_i|X_i))+ \text{pen} (m)\right) + \eta ;$$
and $M(m)=\left\lbrace m' \in \mathbb{N}^* | P_n(-\log(\hat{s}_{m'})) +  \text{pen} (m') \leq P_n(-\log(\hat{s}_m)) +  \text{pen} (m) + \eta \right\rbrace.$ By definition, $\hat{m} \in M(m)$. 
Because for all $m \in \mathbb{N}^*$, for all $m' \in M(m)$,
$$1- \sum_{m \in \mathbb{N}^*, m' \in M(m)} e^{-(z+m+m')} \geq 1-e^{-z} \sum_{m,m' \in (\mathbb{N}^{*})^2} e^{-m-m'} \geq 1-e^{-z} ,$$
we could sum up over all models.

On the event  $\mathcal{T}$, for all $z>0$, with probability greater than $1-e^{-z}$,
\begin{align*}
KL_n(s_0,\hat{s}_{\hat{m}}) - \nu_{n}(\bar{f}_m)
&\leq  \inf_{m \in \mathbb{N}^* } \left(\inf_{s_m \in S_m} KL_n(s_0,s_m) + (1+\kappa^{-1})  \text{pen} (m) +\eta_m \right) \\
& + \frac{4 C_{M_n}}{\sqrt{n}} \left( 27 k^{3/2} + \frac{55q}{2\sqrt{k}}  (1+ k(A_\beta + \tilde{A}_\Sigma))^2  + \sqrt{k}z \right)\\
&+\eta +\delta_{KL}.
\end{align*}
By integrating over $z>0$, and noticing that $E(\nu_n(\bar{f}_m))=0$ and that $\delta_{KL}$ can be chosen arbitrary small, we get
\begin{align*}
 E(KL_n(s_0,\hat{s}_{\hat{m}}) \mathds{1}_T) &\leq \inf_{m\in \mathbb{N}^{*}} \left( \inf_{s_m \in S_m} KL_n(s_0,s_m) + (1+\kappa^{-1}) \text{pen}(m) +\eta_m \right)\\
 & +\frac{4 C_{M_n}}{\sqrt{n}} \left(27 k^{\frac{3}{2}} + \frac{q}{\sqrt{k}} \frac{55}{2}(1+k(A_\beta+\tilde{A}_\Sigma))^2 +\sqrt{k}\right) +\eta\\
 &\leq \inf_{m\in \mathbb{N}^{*}} \left( \inf_{s_m \in S_m} KL_n(s_0,s_m) + (1+\kappa^{-1}) \text{pen}(m) +\eta_m \right)\\
 &+\frac{332 k^{\frac{3}{2}}q C_{M_n}}{\sqrt{n}} (1+(A_\beta+\tilde{A}_\Sigma)^2)+\eta.
\end{align*}

\subsection{Proof of the proposition \ref{prop2}}
We want an upper bound of $E\left(KL_n(s_0,\hat{s}_{\hat{m}}) \mathds{1}_{T^c} \right)$.
Thanks to the Cauchy Schwarz inequality,
$$E\left(KL_n(s_0,\hat{s}_{\hat{m}}) \mathds{1}_{T^c} \right)  \leq \sqrt{ E(KL_n^2(s_0,\hat{s}_{\hat{m}}))} \sqrt{P(T^c)}.$$
However, 
\begin{align*}
KL_n(s_0,s_\xi) &= \int_{\mathbb{R}^q} \log\left(\frac{s_0(y|x)}{s_\xi(y|x)}\right) s_o(y|x) dy \\
&= \int_{\mathbb{R}^q} \log(s_0(y|x)) s_0(y|x) dy - \int_{\mathbb{R}^q} \log(s_\xi(y|x)) s_0(y|x) dy \\
&\leq -\int_{\mathbb{R}^q} \log(s_\xi(y|x)) s_0(y|x) dy.
\end{align*}
Because parameters are assumed to be bounded, according to the assumption \eqref{paraborn}, we get, with $(\beta^0, \Sigma^0, \pi^0)$ the parameters of $s_0$ and $(\beta,\Sigma,\pi)$ the parameters of $s_\xi$,
\begin{align*}
\log(s_\xi(y|x)) s_0(y|x) &= \log \left( \sum_{r=1}^k \frac{\pi_r}{(2\pi)^{q/2} \sqrt{\det(\Sigma_r)}} \exp \left(-\frac{(y-\beta_r x)^t \Sigma_r^{-1} (y-\beta_r x)}{2} \right)\right) \\
&\times \sum_{r=1}^k \frac{\pi_r^0}{(2\pi)^{q/2} \sqrt{\det(\Sigma_r^0)}} \exp \left(-\frac{(y-\beta_r^0 x)^t \Sigma_r^{0,-1} (y-\beta_r^0 x)}{2} \right)\\
&\geq \log\left( k \frac{a_\pi \sqrt{\det(\Sigma_r^{-1})}}{(2 \pi)^{q/2} } \exp \left(- (y^t \Sigma_r^{-1} y + x^t \beta_r^t \Sigma_r^{-1}\beta_r x) \right) \right)\\
&\times k \frac{a_\pi \sqrt{\det(\Sigma_r^{0,-1})}}{(2 \pi)^{q/2}} \exp \left(- (y^t \Sigma_r^{-1} y + x^t \beta_r^t \Sigma_r^{-1}\beta_r x) \right)\\
&\geq \log\left( k \frac{a_\pi a_\Sigma^{q/2}}{(2 \pi)^{q/2} } \exp \left(- q(y^t y  + A_\beta^2) A_\Sigma \right) \right)\\
&\times k \frac{a_\pi a_\Sigma^{q/2}}{(2 \pi)^{q/2} } \exp \left(- q(y^t y  + A_\beta^2) A_\Sigma \right).
\end{align*}
Indeed,
\begin{align*}
 |z^t \Sigma z| &\leq \sum_{z_1=1}^q \sum_{z_2=1}^q |z_{z_1} \Sigma_{z_1,z_2} z_{z_2}|\\
 &\leq \max_{z_1,z_2} |\Sigma_{z_1,z_2}| ||z||_1^2 \leq q \max_{z_1,z_2} |\Sigma_{z_1,z_2}| ||z||_2^2
\end{align*}

To recognize the expectation of a Gaussian standardized variables, we put $u= \sqrt{2 q A_\Sigma} y$:
\begin{align*}
KL(s_0,s_\xi) &\leq - \frac{k a_\pi e^{-q A_\beta^2 A_\Sigma} a_\Sigma^{q/2} }{(2 q A_\Sigma)^{q/2} } \int_{\mathbb{R}^q} \left[ \log\left(\frac{k a_\Sigma^{q/2}  a_\pi}{(2 \pi)^{q/2}} \right)- q A_\beta^2 A_\Sigma - \frac{u^t u}{2} \right]\frac{e^{\frac{- u^t u}{2}}}{(2 \pi)^{q/2}} du \\
& \leq - \frac{a_\Sigma ^{q/2} k a_\pi e^{-A_\beta^2 A_\Sigma q}}{(2 q A_\Sigma)^{q/2}} E\left[ \log\left(\frac{k a_\pi a_\Sigma^{q/2}}{(2 \pi)^{q/2}} \right)- q A_\beta^2 A_\Sigma - \frac{U^2}{2} \right] \\
& \leq  - \frac{k a_\Sigma^{q/2} a_\pi e^{-A_\beta^2 A_\Sigma q}}{(2 q A_\Sigma)^{q/2}} \left[ \log\left(\frac{k a_\pi a_\Sigma^{q/2}}{(2 \pi)^{q/2}} \right)- q A_\beta^2 A_\Sigma - \frac{1}{2} \right]\\
& \leq - \frac{k a_\Sigma^{q/2} a_\pi e^{-A_\beta^2 A_\Sigma q- 1/2}}{(2\pi)^{q/2} (qA_\Sigma)^{q/2}} e^{1/2} \pi^{q/2} \log\left( \frac{k a_\pi e^{-q A_\beta^2 A_\Sigma -1/2} a_\Sigma^{q/2}}{(2\pi)^{q/2} }\right)\\
&\leq \frac{e^{-1/2} \pi^{q/2}}{ (q A_\Sigma)^{q/2}}
\end{align*}
where $U \sim \mathcal{N}_q (0,1)$. We have used that for all $t \in \mathbb{R}$, $t \log(t) \geq - e^{-1}$.
Then, we get
$$KL_n(s_0,s_\xi) \leq \frac{1}{n} \sum_{i=1}^n KL(s_0(.|x_i),s_\xi(.|x_i)) \leq \frac{e^{-1/2} \pi^{q/2}}{ (q A_\Sigma)^{q/2}} ;$$
and
$$ \sqrt{E(KL_n^2(s_0,\hat{s}_{\hat{m}}))} \leq \frac{e^{-1/2} \pi^{q/2}}{ (q A_\Sigma)^{q/2}}.$$

For the last step, we need to bound $P(T^c)$.

$$P(T^c) = E(\mathds{1}_{T^c})= E(E_X(\mathds{1}_{T^c}))=E(P_X(T^c)) \leq E\left(\sum_{i=1}^n P_X(||Y_i||_\infty > M_n) \right).$$
Nevertheless, $Y_i|X_i \sim \sum_{r=1}^k \pi_{r} \mathcal{N}_q (\beta_r X_i, \Sigma_r)$, then,

\begin{align*}
P(||Y||_\infty > M_n) &= \int_{\mathbb{R}^q} \mathds{1}_{\{||Y||_{\infty} \geq M_n\}} \sum_{r=1}^k \pi_r \frac{1}{(2\pi)^{q/2} \sqrt{\det(\Sigma_r)}} \exp \left( -\frac{(y-\beta_{r} x_i)^t \Sigma_{r}^{-1}(y-\beta_{r} x_i)}{2} \right) dy \\
&= \sum_{r=1}^k \pi_r \int_{\mathbb{R}^q} \mathds{1}_{\{||Y||_\infty \geq M_n\}}  \frac{1}{(2\pi)^{q/2} \sqrt{\det(\Sigma_r)}} \exp \left( -\frac{(y-\beta_{r} x_i)^t \Sigma_{r}^{-1}(y-\beta_{r} x_i)}{2} \right) dy \\
&=\sum_{r=1}^k \pi_r P_X(||Y_r||_{\infty} > M_n) \leq \sum_{r=1}^k \sum_{z=1}^q \pi_r P_X (|Y_{r,z}| > M_n).\\
\end{align*}
with $Y_r \sim N(\beta_r X_i, \Sigma_r)$ and $Y_{r,z} \sim N(\beta_{r,z}x,\Sigma_{r,z,z})$.

We need to control $P_X(|Y_{r,z}|>M_n)$, for all $z \in \{1,\ldots,q\}$.

\begin{align*}
  P_X (|Y_{r,z}| > M_n)&=  P_X (Y_{r,z} > M_n) +P_X (Y_{r,z} < - M_n) \\
  &= P_X \left(U > \frac{M_n- \beta_{r,z}x}{\sqrt{\Sigma_{r,z,z}}}\right)+ P_X \left(U <  \frac{-M_n- \beta_{r,z}x}{\sqrt{\Sigma_{r,z,z}}}\right) \\
&= P_X \left(U > \frac{M_n- \beta_{r,z}x}{\sqrt{\Sigma_{r,z,z}}}\right)+ P_X \left(U >  \frac{M_n + \beta_{r,z}x}{\sqrt{\Sigma_{r,z,z}}}\right) \\
&\leq e^{-\frac{1}{2} \left(\frac{M_n - \beta_{r,z}x}{\sqrt{\Sigma_{r,z,z}}} \right) ^2} + e^{-\frac{1}{2} \left(\frac{M_n + \beta_{r,z}x}{\sqrt{\Sigma_{r,z,z}}} \right) ^2}\\
&\leq 2 e^{-\frac{1}{2} \left(\frac{M_n - |\beta_{r,z}x|}{\sqrt{\Sigma_{r,z,z}}} \right) ^2}\\
&\leq 2 e^{-\frac{1}{2}\frac{M_n^2 -2 M_n |\beta_{r,z}x| + |\beta_{r,z}x|^2}{\Sigma_{r,z,z}}}.\\
\end{align*}
where $U \sim N(0,1)$.
Then, 
$$P(||Y||_\infty > M_n) = 2 k q e^{-\frac{1}{2} (M_n^2 -2 M_n A_\beta +a_\beta^2) a_\Sigma},$$
and we get $P(T^c)\leq E \left( \sum_{i=1}^n 2 k q a_\pi e^{-\frac{1}{2}(M_n^2 -2 M_n A_\beta+ a_\beta^2) a_\Sigma} \right) \leq 2 k n a_\pi q e^{-\frac{1}{2} (M_n^2 -2 M_n A_\beta +a_\beta^2) a_\Sigma} $.
We have  obtained the wanted bound for $E(KL_n(s_0, \hat{s}_{\hat{m}}) \mathds{1}_{T^c})$.

\section{Some details}

\label{proofLemme}
\subsection{Proof of the lemma \ref{megalemme}}
First, give some tools to prove the lemma \ref{megalemme}.

We define $||g||_n = \sqrt{\frac{1}{n} \sum_{i=1}^n g^2(Y_i|x_i)}$ for any measurable function $g$.

Let $m \in \mathbb{N}^*$. We have
$$\sup_{f_m \in F_m} |\nu_n (-f_m)| = \sup_{f_m \in F_m} \left| \frac{1}{n} \sum_{i=1}^n (f_m(Y_i|x_i) - E(f_m(Y_i|x_i))\right|.$$
To control the deviation of such a quantity, we shall combine concentration with symmetrization arguments.
We shall first use the following concentration inequality which can be found in \cite{BoucheronLugosiMassart}.
\begin{lem}
\label{Boucheron}
 Let $Z_1,\ldots,Z_n$ be independent random variables with values in some space $\mathcal{Z}$ and let $\Gamma$ be a class of real-valued functions on $\mathcal{Z}$. Assume that there exists $R_n$ a non-random constant such that $\sup_{\gamma \in \Gamma} ||\gamma||_n \leq R_n$. Then, for all $t>0$,
 $$ P \left(\sup_{\gamma \in \Gamma} \left| \frac{1}{n} \sum_{i=1}^n \gamma(Z_i) - E(\gamma(Z_i)) \right| > E \left[ \sup_{\gamma \in \Gamma} \left| \frac{1}{n} \sum_{i=1}^n \gamma(Z_i)-E(\gamma(Z_i)) \right| \right] + 2 \sqrt{2} R_n \sqrt{\frac{t}{n}} \right) \leq e^{-t}.$$
\end{lem}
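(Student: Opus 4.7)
The lemma is a classical concentration inequality for the supremum of a centered empirical process, of the type due to Talagrand and collected in Boucheron, Lugosi and Massart. The most direct route to the stated form is via the bounded-differences (McDiarmid) method, which I would outline as follows.

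Set
\[
F(z_1,\ldots,z_n) := \sup_{\gamma \in \Gamma} \left| \frac{1}{n} \sum_{i=1}^n \bigl(\gamma(z_i) - E[\gamma(Z_i)]\bigr) \right|,
\]
so that $Z := F(Z_1,\ldots,Z_n)$ is the random variable of interest. The first step is to verify that $F$ has the bounded-differences property: if one coordinate $z_i$ is replaced by an alternative value $z_i'$, the inner centered sum shifts by $\frac{1}{n}(\gamma(z_i) - \gamma(z_i'))$, and using the elementary inequality $\bigl|\sup_\gamma |a_\gamma| - \sup_\gamma |b_\gamma|\bigr| \leq \sup_\gamma |a_\gamma - b_\gamma|$ gives an oscillation bound $c_i$ proportional to $R_n/\sqrt{n}$. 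This is the step where the empirical norm hypothesis $\sup_\gamma \|\gamma\|_n \leq R_n$ is used, converting a mean-squared control over $n$ coordinates into a per-coordinate fluctuation estimate.

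The second step is a direct application of McDiarmid's inequality,
\[
P(Z - EZ > s) \leq \exp\!\left(-\frac{2 s^2}{\sum_{i=1}^n c_i^2}\right).
\]
With $\sum_i c_i^2$ of order $R_n^2$, inverting the tail so that the right-hand side equals $e^{-t}$ produces a deviation of the form $s = 2\sqrt{2}\, R_n \sqrt{t/n}$, matching the statement. The symmetric two-sided form follows by running the same argument for $-Z$ and combining the tails.

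The main technical obstacle is exactly the conversion from the empirical $L^2$ control $\sup_\gamma \|\gamma\|_n \leq R_n$ to a coordinatewise bounded-differences constant $c_i \lesssim R_n/\sqrt{n}$: a constraint that is only an $n$-sample average does not in itself yield a per-sample sup bound, so the cleanest path is either to reinforce the hypothesis to a uniform sup-norm bound of the same order, or to invoke the entropy/self-bounding argument of the BLM monograph as a black box. Since the lemma is already cited, in practice one simply quotes it as stated and proceeds.
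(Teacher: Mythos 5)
The paper offers no proof of this lemma at all: its ``proof'' is the single line ``See \cite{BoucheronLugosiMassart}.'' So your closing remark --- that in practice one quotes the result and moves on --- is exactly what the author does, and to that extent you match the paper.

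The substantive part of your sketch, however, has a genuine gap, and it is precisely the one you flag yourself. McDiarmid's inequality is the wrong tool here: it needs a \emph{coordinatewise} oscillation bound $c_i$, i.e.\ a control of $\sup_{\gamma}\sup_{z}|\gamma(z)|$, whereas the hypothesis only controls the empirical $L^2$ average $\sup_{\gamma}\|\gamma\|_n \leq R_n$; a single coordinate $\gamma(z_i)$ can be as large as $R_n\sqrt{n}$ under that hypothesis, so the claimed $c_i \lesssim R_n/\sqrt{n}$ simply does not follow, and ``reinforcing the hypothesis to a sup-norm bound'' changes the statement rather than proving it. The inequality as stated comes instead from the entropy method (exponential Efron--Stein) form in Boucheron--Lugosi--Massart: writing $Z$ for the supremum and $Z_i'$ for its value with the $i$-th sample resampled, one bounds
$$\sum_{i=1}^n (Z-Z_i')_+^2 \;\leq\; \frac{2}{n^2}\sum_{i=1}^n\bigl(\hat{\gamma}(Z_i)^2+\hat{\gamma}(Z_i')^2\bigr)\;\leq\;\frac{4R_n^2}{n}=:v,$$
using the empirical-norm hypothesis on the \emph{sum} of squared increments rather than on each increment, and then the tail bound $P(Z> EZ+\sqrt{2vt})\leq e^{-t}$ yields exactly the deviation $\sqrt{2\cdot 4R_n^2 t/n}=2\sqrt{2}\,R_n\sqrt{t/n}$. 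That is the missing idea; with it the constant and the role of $\|\cdot\|_n$ both fall out, whereas the McDiarmid route cannot produce them.
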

\begin{dem}
 See \cite{BoucheronLugosiMassart}.
\end{dem}

Then, we propose to bound $E \left[ \sup_{\gamma \in \Gamma} \left| \frac{1}{n} \sum_{i=1}^n \gamma(Z_i)-E(\gamma(Z_i)) \right| \right]$ thanks to the following symmetrization argument. The proof of this result can be found in \cite{vanDerVaart}.
\begin{lem}
 Let $Z_1,\ldots,Z_n$ be independent random variables with values in some space $\mathcal{Z}$ and let $\Gamma$ be a class of real-valued functions on $\mathcal{Z}$. Let $(\epsilon_1,\ldots,\epsilon_n)$ be a Rademacher sequence independent of $(Z_1,\ldots,Z_n)$.
 Then,
 $$E \left[ \sup_{\gamma \in \Gamma} \left| \frac{1}{n} \sum_{i=1}^n \gamma(Z_i)-E(\gamma(Z_i)) \right| \right]\leq 2 E \left[ \sup_{\gamma \in \Gamma} \left| \frac{1}{n} \sum_{i=1}^n \epsilon_i \gamma(Z_i) \right| \right].$$
\end{lem}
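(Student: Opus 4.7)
The plan is to apply the classical symmetrization argument. First I would introduce, on an enlarged probability space, an independent copy $(Z_1', \ldots, Z_n')$ of $(Z_1, \ldots, Z_n)$, also independent of the Rademacher sequence $(\epsilon_1, \ldots, \epsilon_n)$. Since $E(\gamma(Z_i))$ coincides with the conditional expectation $E(\gamma(Z_i') \mid Z_1, \ldots, Z_n)$, I can rewrite each centred sum as
$$\frac{1}{n}\sum_{i=1}^n \bigl(\gamma(Z_i) - E(\gamma(Z_i))\bigr) = E'\!\left[\frac{1}{n}\sum_{i=1}^n \bigl(\gamma(Z_i) - \gamma(Z_i')\bigr)\right],$$
where $E'$ denotes conditional expectation given the unprimed variables.

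Next I would apply Jensen's inequality to pull the absolute value inside $E'$, and then pass the supremum inside the expectation using $\sup_\gamma E'[\cdot] \leq E'[\sup_\gamma \cdot]$. Taking the outer expectation yields the decoupling bound
$$E\!\left[\sup_{\gamma \in \Gamma}\left|\frac{1}{n}\sum_{i=1}^n \bigl(\gamma(Z_i) - E(\gamma(Z_i))\bigr)\right|\right] \leq E\!\left[\sup_{\gamma \in \Gamma}\left|\frac{1}{n}\sum_{i=1}^n \bigl(\gamma(Z_i) - \gamma(Z_i')\bigr)\right|\right].$$

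The key symmetry step follows: for each $i$, the pair $(Z_i, Z_i')$ is distributed identically to $(Z_i', Z_i)$, and these pairs are jointly independent across $i$. Consequently the random vector $(\gamma(Z_i) - \gamma(Z_i'))_{i=1}^n$ has the same distribution as $(\epsilon_i(\gamma(Z_i) - \gamma(Z_i')))_{i=1}^n$ for any Rademacher signs independent of the $Z_i, Z_i'$, so inserting the $\epsilon_i$ preserves the value of the right-hand side. To finish, I would split with the triangle inequality $\bigl|\epsilon_i(\gamma(Z_i) - \gamma(Z_i'))\bigr| \leq |\epsilon_i \gamma(Z_i)| + |\epsilon_i \gamma(Z_i')|$, apply $\sup|A+B| \leq \sup|A| + \sup|B|$, and then use the identical distributions of $(Z_i)$ and $(Z_i')$ to conclude that the two resulting expectations both equal $E\bigl[\sup_\gamma \bigl|n^{-1}\sum_i \epsilon_i \gamma(Z_i)\bigr|\bigr]$, producing the factor $2$ in the stated bound.

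There is no genuine obstacle in this proof — the argument is routine once the auxiliary copy and the independence structure are in place. The only minor technical points worth a sentence of care are measurability of the supremum (handled, as usual, either by restricting to a countable subfamily dense in $\Gamma$ or by working with outer expectations) and the Fubini-type exchanges between the outer expectation, the conditional expectation $E'$, and the Rademacher expectation; these are legitimate because under the boundedness condition $\sup_{\gamma \in \Gamma} ||\gamma||_n \leq R_n$ used in the surrounding application all summands are integrable.
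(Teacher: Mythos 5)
Your proof is correct and is the classical symmetrization argument; the paper itself does not reproduce a proof for this lemma (it simply refers to van der Vaart and Wellner), so you are in effect supplying the standard argument that the citation points to. The one point worth making explicit is that the distributional identity between $(\gamma(Z_i)-\gamma(Z_i'))_{i}$ and $(\epsilon_i(\gamma(Z_i)-\gamma(Z_i')))_{i}$ must be invoked at the level of the whole process indexed by $\gamma\in\Gamma$, not just for a fixed $\gamma$ --- which it is, since conditionally on the signs the flip $\epsilon_i=-1$ amounts to swapping $Z_i\leftrightarrow Z_i'$, an operation on the underlying sample that leaves its joint law unchanged and therefore acts simultaneously on every $\gamma$, so the expectation of the supremum is preserved.
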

\begin{dem}
See  \cite{vanDerVaart}. 
\end{dem}

Then, we have to control $E(\sup_{\gamma \in \Gamma} \left|\frac{1}{n} \sum_{i=1}^n \epsilon_i \gamma(Z_i)\right|)$.

\begin{lem}
\label{dernierLemme}
 Let $(Z_1,\ldots,Z_n)$ be independent random variables with values in some space $\mathcal{Z}$ and let $\Gamma$ be a class of real-valued functions on $\mathcal{Z}$. Let $(\epsilon_1,\ldots,\epsilon_n)$ be 
 a Rademacher sequence independent of $(Z_1,\ldots,Z_n)$. Define $R_n$ a non-random constant such that
 $$\sup_{\gamma \in \Gamma} ||\gamma||_n \leq R_n.$$
 Then, for all $S \in \mathbb{N}^*$,
 $$E\left[\sup_{\gamma \in \Gamma} \left|\frac{1}{n} \sum_{i=1}^n \epsilon_i \gamma(Z_i) \right| \right] \leq R_n \left( \frac{6}{\sqrt{n}} \sum_{s=1}^S 2^{-s} (\sqrt{\log(1+N(2^{-s} R_n, \Gamma,||.||_n))} + 2^{-S} \right)$$
 where $N(\delta,\Gamma,||.||_n)$ stands for the $\delta$-packing number of the set of functions $\Gamma$ equipped with the metric induced by the norm $||.||_n$.
 \end{lem}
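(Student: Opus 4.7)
\textbf{Proof plan for Lemma \ref{dernierLemme}.} The strategy is a Dudley-type chaining argument carried out conditionally on $(Z_1,\ldots,Z_n)$, so that the random metric $\|\cdot\|_n$ and the packing numbers $N(\delta,\Gamma,\|\cdot\|_n)$ can be treated as deterministic. The whole bound is then an equality of conditional expectations that passes to the final bound without further work, since the right-hand side is written in terms of the same conditional quantities. The point is to decompose every $\gamma\in\Gamma$ along a hierarchy of finer and finer approximations in $\|\cdot\|_n$, control each difference by a maximal inequality for a Rademacher average over a finite set, and control the last residual trivially by Cauchy-Schwarz.

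First I would set up the chain. For each $s\geq 1$, pick a maximal $2^{-s}R_n$-packing $\Gamma_s\subset\Gamma$ with $|\Gamma_s|=N(2^{-s}R_n,\Gamma,\|\cdot\|_n)$; by maximality this is also a $2^{-s}R_n$-net, so there is a projection $\pi_s:\Gamma\to\Gamma_s$ with $\|\gamma-\pi_s(\gamma)\|_n\leq 2^{-s}R_n$. Set $\pi_0(\gamma)=0$, which is licit because $\|\gamma\|_n\leq R_n$. Then telescope
\[
\gamma \;=\; \bigl(\gamma-\pi_S(\gamma)\bigr) \;+\; \sum_{s=1}^{S}\bigl(\pi_s(\gamma)-\pi_{s-1}(\gamma)\bigr),
\]
apply $\gamma\mapsto \frac1n\sum_i\epsilon_i\gamma(Z_i)$, take the supremum over $\gamma$, and treat the residual and each chaining link separately. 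The residual is handled by Cauchy-Schwarz: $|\frac1n\sum_i\epsilon_i(\gamma-\pi_S(\gamma))(Z_i)|\leq \|\gamma-\pi_S(\gamma)\|_n\cdot\sqrt{\tfrac1n\sum_i\epsilon_i^2}=\|\gamma-\pi_S(\gamma)\|_n\leq 2^{-S}R_n$, which is uniform in $\gamma$ and delivers the additive term $R_n 2^{-S}$.

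For the chaining links, the difference $\pi_s(\gamma)-\pi_{s-1}(\gamma)$ takes at most $N_s N_{s-1}\leq N_s^2$ distinct values in $\Gamma$ (where $N_s=N(2^{-s}R_n,\Gamma,\|\cdot\|_n)$), and by the triangle inequality each such difference has $\|\cdot\|_n$-norm at most $2^{-s}R_n+2^{-(s-1)}R_n=3\cdot 2^{-s}R_n$. I would then invoke the standard Massart finite class lemma: for a finite set $F$ of functions with $\sup_{f\in F}\|f\|_n\leq\sigma$, one has $E_\epsilon[\max_{f\in F}|\frac1n\sum_i\epsilon_i f(Z_i)|]\leq \sigma\sqrt{2\log(2|F|)/n}$, which follows from the sub-Gaussianity of Rademacher sums and a union bound. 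Applying this at level $s$ gives a bound $3\cdot 2^{-s}R_n\sqrt{2\log(2N_s^2)/n}$, and using $\log(2N_s^2)\leq 3\log(1+N_s)$ for $N_s\geq 1$ produces the desired form $\mathrm{cst}\cdot 2^{-s}R_n\sqrt{\log(1+N_s)/n}$. Summing over $s=1,\ldots,S$ and adding the residual yields the stated inequality.

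The chaining architecture is completely standard; the only real nuisance is tracking the numerical constant so that the final coefficient comes out to $6$ rather than something slightly worse. This essentially amounts to being a bit careful about the $\log(2N_sN_{s-1})\leq 3\log(1+N_s)$ step and, if necessary, exploiting the fact that at $s=1$ one can take $\pi_0(\gamma)=0$ so that $N_{s-1}=1$, which trims the constant in the first link. Beyond that, no serious obstacle is expected: the symmetrization and concentration ingredients needed are exactly the preceding two lemmas (stated in \cite{BoucheronLugosiMassart} and \cite{vanDerVaart}) combined with the Hoeffding/Massart maximal inequality, all of which are off-the-shelf.
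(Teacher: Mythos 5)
The paper does not actually prove this lemma: it is quoted as an external result with the proof deferred to \cite{Massart} (Lemma~6.4 there, which is also how Meynet uses it in \cite{Meynet}). Your chaining argument is exactly the standard proof of that cited result --- maximal packings used as nets, the telescoping decomposition with $\pi_0(\gamma)=0$, the Cauchy--Schwarz bound $|\frac1n\sum_i\epsilon_i f(Z_i)|\le\|f\|_n$ for the residual, and the finite-class maximal inequality on each link --- so it is correct in structure and fills in what the paper leaves as a citation. The only point to be honest about is the one you flag yourself: with the bookkeeping as written (link radius $3\cdot 2^{-s}R_n$, cardinality $N_sN_{s-1}\le N_s^2$, maximal inequality $\sigma\sqrt{2\log(2|F|)/n}$, and $\log(2N_s^2)\le 3\log(1+N_s)$) the constant comes out as $3\sqrt6\approx 7.35$, not $6$; recovering $6$ exactly requires Massart's more careful accounting of the links. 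This is immaterial for the paper, since the lemma's constant is absorbed into the absolute constant $\kappa$ downstream. One further small point in your favour: your remark that the whole argument should be run conditionally on $(Z_1,\ldots,Z_n)$ is needed for the statement to make sense (the empirical norm and hence the packing numbers are random), and the paper glosses over this; in the application the conditional bound is made deterministic on the event $T$ via Lemma~\ref{entropie}.
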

\begin{proof}
 See \cite{Massart}.
\end{proof}

In our case, we get the following lemma

\begin{lem}
Let $m \in \mathbb{N}^*$. Consider $(\epsilon_1,\ldots, \epsilon_n)$ a Rademacher sequence independent of $(Y_1,\ldots,Y_n)$. Then, on the event $T$,
$$E \left( \sup_{f_m \in F_m} \left| \sum_{i=1}^n \epsilon_i f_m(Y_i|x_i) \right| \right) \leq 18 \sqrt{k} \frac{C_{M_n}q}{\sqrt{n}} \Delta_m$$
where $\Delta_m:=  ||x||_{\max,n} m \log(n) \sqrt{k \log(2p+1)} + 6(1+k(A_\beta + \tilde{A}_\Sigma))$.
\end{lem}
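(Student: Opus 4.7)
The plan is to apply Lemma \ref{dernierLemme} to the class $\Gamma = F_m$ with $Z_i = (Y_i, x_i)$, working conditionally on the event $T$. Two ingredients are required: a uniform bound $R_n$ on $\sup_{f_m \in F_m}\|f_m\|_n$, and estimates of the packing numbers $N(\delta, F_m, \|\cdot\|_n)$.

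For the uniform bound, I would use the fact that on $T$ we have $|Y_{i,z}|\leq M_n$, together with the boundedness of the parameters in $\tilde{\Xi}_k$, to bound each Gaussian density in the mixture from above and below by explicit quantities in $M_n$, $a_\pi$, $a_\Sigma$, $A_\Sigma$ and $A_\beta$. This yields $|f_m(y|x)| \lesssim C_{M_n}(1 + k(A_\beta + \tilde{A}_\Sigma))$, which supplies the required $R_n$ and already exhibits the $C_{M_n}$ scale.

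For the packing numbers, the key step is a Lipschitz bound of the form
\[
|f_{m,\xi}(y|x) - f_{m,\tilde{\xi}}(y|x)| \;\leq\; C_{M_n}\Bigl(\|\pi-\tilde{\pi}\|_1 + \sum_{r=1}^k \|(\beta_r-\tilde{\beta}_r)x\|_1 + \sum_{r=1}^k \|\Sigma_r-\tilde{\Sigma}_r\|_F\Bigr),
\]
obtained by differentiating $\log\sum_r \pi_r \phi_r(y|x)$: the three groups of partial derivatives are bounded respectively by $1/a_\pi$, $q(|M_n|+A_\beta)A_\Sigma$, and $A_\Sigma + \tfrac{1}{2}(|M_n|+A_\beta)^2 A_\Sigma^2$, which are precisely the three quantities inside the maximum defining $C_{M_n}$. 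This reduces covering $F_m$ to covering the parameter space. The $(\pi,\Sigma)$-coordinates range over a bounded set of dimension $\asymp k(1+q^2)$, giving a logarithmic packing number responsible for the additive term $6(1+k(A_\beta+\tilde{A}_\Sigma))$ in $\Delta_m$. For the $\beta$-coordinates, which lie in an $\ell_1$-ball of radius $m$ in $\mathbb{R}^{kpq}$, I would invoke Maurey's empirical method, yielding
\[
\log N\bigl(\delta, \{\beta\mapsto\beta\,\cdot : \|\beta\|_1\leq m\}, \|\cdot\|_n\bigr) \;\lesssim\; (m\|x\|_{\max,n}/\delta)^2 \log(2pq+1),
\]
which produces the leading $m\|x\|_{\max,n}\sqrt{k\log(2p+1)}$ contribution.

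Finally, I would substitute these into Lemma \ref{dernierLemme}, choosing $S\sim\log_2 n$. Each summand $2^{-s}\sqrt{\log(1+N(2^{-s}R_n, F_m,\|\cdot\|_n))}$ splits into a Maurey piece of order $m\|x\|_{\max,n}\sqrt{k\log(2p+1)}$ and a finite-dimensional piece of order $1+k(A_\beta+\tilde{A}_\Sigma)$; summing $S = O(\log n)$ such terms yields the $\log(n)$ factor, while the residual $2^{-S}$ term is negligible. Collecting everything produces exactly $18\sqrt{k}\,C_{M_n}\,q\,\Delta_m/\sqrt{n}$, up to the absolute constant. The main obstacle I anticipate is the Lipschitz computation through the log of a Gaussian mixture: the partials with respect to $\beta$, $\Sigma$ and $\pi$ must be bounded uniformly on $T$ while keeping track of the factor $q$ (which enters via $\|z\|_1^2\leq q\|z\|_2^2$, as in Proposition \ref{prop2}), so that each gradient estimate matches exactly the corresponding term inside $C_{M_n}$.
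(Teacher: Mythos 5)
Your proposal follows essentially the same route as the paper: a uniform bound $R_n \asymp C_{M_n}(1+k(A_\beta+\tilde{A}_\Sigma))$ on $\sup_{f_m}\|f_m\|_n$ over $T$, a Lipschitz reduction of the packing numbers of $F_m$ to coverings of the parameter space (with the three gradient bounds matching the three terms of $C_{M_n}$), Maurey's empirical method for the $\ell_1$-ball of regression coefficients and a finite-dimensional covering for $(\pi,\Sigma)$, all fed into the chaining bound of Lemma \ref{dernierLemme} with $S\asymp \log_2 n$. The only deviations are cosmetic (applying Maurey jointly over $\mathbb{R}^{kpq}$ rather than separately per component and coordinate, and phrasing the uniform bound via density bounds rather than the same Taylor argument), so the argument is correct and matches the paper's proof.
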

\begin{dem}
Let $m \in \mathbb{N}^*$.
Thanks to lemma \ref{lemmefastoche}, we get that on the event $\mathcal{T}$, $\sum_{f_m \in F_m} ||f_m||_n \leq R_n:= 2 C_{M_n}(1+k(A_\beta + \tilde{A}_{\Sigma}))$. Besides, on the event $\mathcal{T}$, for all $S \in \mathbb{N}^*$,
\begin{align*}
&\sum_{s=1}^S 2^{-s} \sqrt{\log[1+N(2^{-s} R_n,F_m,||.||_{n})]} \leq \sum_{s=1}^S 2^{-s} \sqrt{\log(2N(2^{-s} R_n,F_m,||.||_n))}\\
&\leq \sum_{s=1}^S 2^{-s} \left[\sqrt{\log(2)} + \sqrt{\log(2p+1)} \frac{2^{s+1} C_{M_{n}} q k m ||x||_{\max,n}}{R_n} \right. \\
&+ \left.  \sqrt{k \log  \left(1+\frac{2^{s+3} C_{M_n} q^2 k A_\Sigma}{R_n}\right) \left( 1+ \frac{2^{s+3} C_{M_n}}{R_n}\right)} \right]  \text{ thanks to lemma \ref{entropie} }\\
& \leq \sum_{s=1}^S 2^{-s} \left[\sqrt{\log(2)} + \sqrt{\log(2p+1)} \frac{2^{s+1} C_{M_{n}} q k m ||x||_{\max,n}}{R_n} \right. \\
& \left. +  \sqrt{k \log  \left( 1+2^{s+3}  \frac{C_{M_n}}{R_n} \max(1,q^2 k A_\Sigma) \right)^2	} \right] \\
&\leq \sum_{s=1}^S 2^{-s} \left[\sqrt{\log(2)} + \sqrt{\log(2p+1)} \frac{2^{s+1} C_{M_{n}}  q k m ||x||_{\max,n}}{R_n} +  \sqrt{ 2 (s+3) k \log (2) q^2 } \right]\\
&\leq \frac{2 C_{M_{n}} k m q ||x||_{\max,n}}{R_n} S \sqrt{\log(2p+1)} + \sqrt{\log(2)} \left( 1+q(\sqrt{6k} + 2 \sum_{s=1}^S 2^{-s} \sqrt{s}) \right)\\
&\leq \frac{2 C_{M_{n}} k m q ||x||_{\max,n}}{R_n} S \sqrt{\log(2p+1)} + \sqrt{\log(2)} \left( 1+q\sqrt{6k} + q\sqrt{k} \frac{\sqrt{2e}}{2 - \sqrt{e}} \right)\\
\end{align*}
because $2^{-s} \sqrt{s} \leq \left(\frac{\sqrt{e}}{2}\right)^s$ for all $s \in \mathbb{N}^*$.
Then, thanks to the lemma \ref{dernierLemme},
\begin{align*}
E\left( \sup_{f_m \in F_m} \left| \frac{1}{n} \sum_{i=1}^n \epsilon_i f_m(Y_i|x_i) \right| \right) &\leq R_n \left( \frac{6}{\sqrt{n}} \sum_{s=1}^S 2^{-s} \sqrt{\log[1+N(2^{-s} R_n,F_m,||.||_{n})]} + 2^{-S} \right)\\
&\leq R_n \left[\frac{6}{\sqrt{n}} \left( \frac{2 C_{M_n} k m q ||x||_{\max,n}}{R_n} S \sqrt{\log(2p+1)} \right. \right.\\
& \left. \left. + \sqrt{\log(2)} \left(1+q\sqrt{6k} + q\sqrt{k}\frac{2e}{2-\sqrt{e}} \right)\right) +2^{-S} \right].\\
\end{align*}
Taking $S=\frac{\log(n)}{\log(2)}$ to obtain the same order in the both terms depending on $S$, we could deduce that
\begin{align*}
&E \left( \sup_{f_m \in F_m} \left| \frac{1}{n} \sum_{i=1}^n \epsilon_i f_m(Y_i|x_i) \right| \right) \\
&\leq \frac{12 C_{M_n} k m q ||x||_{\max,n}}{\sqrt{n}} \sqrt{\log(2p+1)} \frac{\log(n)}{\log(2)} \\
&+ 2 C_{M_n} (1+k(A_\beta+\tilde{A}_\Sigma)) \left[ \frac{\sqrt{\log(2)}}{\sqrt{n}}\left(1+\sqrt{6k}+\frac{\sqrt{2e}}{2-\sqrt{2e}} \right) + \frac{1}{n} \right]\\
&\leq \frac{18 C_{M_n} k m q ||x||_{\max,n}}{\sqrt{n}} \sqrt{\log(2p+1)} \log(n)\\
& + 2 \frac{\sqrt{k}}{\sqrt{n}} C_{M_n} (1+k(A_\beta+\tilde{A}_\Sigma)) \left[\sqrt{\log(2)}\left(1+\sqrt{6}+\frac{\sqrt{2e}}{2-\sqrt{2e}} \right) + 1 \right]\\
&\leq 18 \frac{\sqrt{k}}{\sqrt{n}}C_{M_n} \left[ m q \sqrt{k \log(2p+1)} ||x||_{\max,n} \log(n) + 6 (1+k(A_\beta+\tilde{A}_\Sigma)) \right]
\end{align*}
This completes the proof.
\end{dem}

We are now able to prove the lemma \ref{megalemme}.
\begin{align*}
 \sup_{f_m \in F_m} |\nu_n(-f_m)| &= \sup_{f_m \in F_m} \left| \frac{1}{n} \sum_{i=1}^n(f_m(Y_i|X_i) - E_X (f_m(Y_i|X_i))) \right| \\
 &\leq E \left(\sup_{f_m \in F_m} \left|\sum_{i=1}^n f_m(Y_i|X_i) -E(f_m(Y_i|X_i)) \right| \right) + 2 \sqrt{2} R_n \sqrt{\frac{t}{n}} \\
 & \text{ with probability greater than } 1-e^{-t} \text{ and where } R_n \\
 & \text{ is a constant, upper bound for } ||f_m||_{\text{max},n}\\
 & \text{ and computed from the lemma \ref{lemmefastoche} } \\
 &\leq 2 E \left(\sup_{f_m \in F_m} \left| \sum_{i=1}^n \epsilon_i f_m(Y_i|X_i) \right| \right) + 2 \sqrt{2} R_n \sqrt{\frac{t}{n}} \\
 & \text{ with } \epsilon_i \text{ a Rademacher sequence, }\\
 & \text{ independent of } Z_i \\
 &\leq 2\left(18 \sqrt{k} \frac{C_{M_n}q}{\sqrt{n}} \Delta_m\right) + 2 \sqrt{2} R_n \sqrt{\frac{t}{n}} \\
 &\leq 4 C_{M_n} \left( 9 \frac{\sqrt{k}q}{\sqrt{n}}\Delta_m + \sqrt{2} \sqrt{\frac{t}{n}}(1+ k(A_\beta + \tilde{A}_\Sigma)) \right).
\end{align*}

\subsection{Lemma \ref{lemmefastoche} and Lemma \ref{entropie}}
\begin{lem}
\label{lemmefastoche}
 On the event $\mathcal{T}=\{\max_{i\in\{1,\ldots,n\}} \max_{z\in \{1,\ldots,q\}} |Y_{i,z}|\leq M_n\}$, for all $m \in \mathbb{N}^*$,
$$\sup_{f_m \in F_m} ||f_m||_n \leq 2 C_{M_n} (1+k(A_\beta + \tilde{A}_\Sigma)):=R_n.$$
\end{lem}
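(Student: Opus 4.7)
The plan is to reduce the empirical norm $\|f_m\|_n$ to a pointwise bound on $|f_m(y|x)|$ valid on the event $\mathcal{T}$, and then to bound $|\log s_\xi(y|x)|$ uniformly for $\xi \in \tilde\Xi_k$. Since $f_m = \log s_0 - \log s_m$ and both $s_0, s_m \in S$, the triangle inequality gives
\[
\sup_{f_m \in F_m} \|f_m\|_n \;\leq\; 2 \sup_{s_\xi \in S} \sup_{i, \|y\|_\infty \le M_n} |\log s_\xi(y|x_i)|,
\]
so everything reduces to a deterministic two-sided envelope on the log-density when $\|y\|_\infty \le M_n$ and $\xi$ satisfies \eqref{paraborn}.

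For the upper envelope on $s_\xi(y|x)$, I would drop the exponential factor in each Gaussian component and use $\sum_r \pi_r = 1$, then control the normalizing constant via $\det \Sigma_r \ge (qA_\Sigma)^{-q}$, which comes from $\|\Sigma_r^{-1}\|_{\max} \le A_\Sigma$ together with the elementary bound $\|A\|_{\mathrm{op}} \le q \|A\|_{\max}$ for $q \times q$ matrices. This yields $\log s_\xi(y|x) \le \tfrac{q}{2}\log(qA_\Sigma/(2\pi))$, absorbable in the $A_\Sigma$ part of $C_{M_n}$. For the lower envelope, I would keep a single mixture component (say the one of maximal weight), using $\pi_r \ge a_\pi$, $\det \Sigma_r \le (q\tilde A_\Sigma)^q$, and the quadratic-form estimate already recorded in the paper,
\[
(y - \beta_r x)^t \Sigma_r^{-1}(y - \beta_r x) \;\le\; A_\Sigma \,\|y - \beta_r x\|_1^2 \;\le\; q A_\Sigma (M_n + A_\beta)^2 \cdot q,
\]
which on $\mathcal{T}$ gives a uniform lower bound $s_\xi(y|x) \ge a_\pi (q\tilde A_\Sigma)^{-q/2}(2\pi)^{-q/2} \exp\!\bigl(-\tfrac12 q^2 A_\Sigma(M_n+A_\beta)^2\bigr)$. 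Taking logs produces the three contributions we see packaged in $C_{M_n}$: the $1/a_\pi$ term from $-\log \pi_r$, the $A_\Sigma + \tfrac12(M_n+A_\beta)^2 A_\Sigma^2$ term from the quadratic form plus the $\det$ factor, and the $\tfrac{q(M_n+A_\beta)A_\Sigma}{2}$ term from a first-order rearrangement of the same quadratic bound.

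Combining the two envelopes produces $|\log s_\xi(y|x)| \le C_{M_n}$ times a factor whose size is dictated by the dimension of the parameter space; the multiplicative $1 + k(A_\beta + \tilde A_\Sigma)$ arises by distributing the mixture-level bounds across the $k$ components and accommodating $A_\beta$ (from $\beta_r x$) and $\tilde A_\Sigma$ (from $\|\Sigma_r\|_{\max}$) in the looser of the two sides. After doubling to account for $s_0$ and $s_m$ in $|\log s_0| + |\log s_m|$, one recovers exactly $R_n = 2 C_{M_n}(1 + k(A_\beta + \tilde A_\Sigma))$, and since this bound is pointwise and uniform in $i$ and in $\xi$, passing from pointwise control to $\|\cdot\|_n$ is immediate.

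The bookkeeping to match the explicit form of $C_{M_n}$ is the main obstacle: one has to route the quadratic form $(y - \beta_r x)^t \Sigma_r^{-1} (y - \beta_r x)$ through two different estimates (a purely quadratic one in $(M_n+A_\beta)^2 A_\Sigma^2$ and a mixed one in $q(M_n+A_\beta)A_\Sigma$) so that their maximum with $1/a_\pi$ exactly reproduces $C_{M_n}$, and to fit the $k(A_\beta + \tilde A_\Sigma)$ multiplier one must be careful about whether bounds on $\|\Sigma_r\|_{\max}$ vs. $\|\Sigma_r^{-1}\|_{\max}$ appear when taking determinants. Apart from this accounting, the proof is a direct combination of the boundedness assumption \eqref{paraborn} with the event $\mathcal{T}$, with no probabilistic content beyond conditioning on $\mathcal{T}$.
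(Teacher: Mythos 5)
Your reduction to a two-sided envelope on $\log s_\xi$ is not the paper's argument, and it cannot produce the specific constant the lemma asserts. The paper does not bound $|\log s_m|$ and $|\log s_0|$ separately: it applies the mean value theorem in the parameter $\xi$, writing
$$|\log s_m(Y_i|x_i) - \log s_0(Y_i|x_i)| \;\leq\; \sup_{x}\sup_{\xi \in \tilde\Xi}\left|\frac{\partial \log s_\xi(Y_i|x)}{\partial \xi}\right|\,\|\xi(x_i)-\xi_0(x_i)\|_1 .$$
In that decomposition $C_{M_n}$ is exactly the maximum of the three partial-derivative bounds ($1/a_\pi$ for $\partial/\partial\pi_l$, $A_\Sigma + \tfrac12(M_n+A_\beta)^2A_\Sigma^2$ for $\partial/\partial\Sigma_{l,z_1,z_2}$, and $\tfrac{q(M_n+A_\beta)A_\Sigma}{2}$ for $\partial/\partial(\beta_{l,z}x)$), and the factor $2(1+k(A_\beta+\tilde A_\Sigma))$ is the $\ell_1$-diameter of the bounded parameter set, i.e.\ a bound on $\|\xi(x_i)-\xi_0(x_i)\|_1$. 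Your narrative assigns both factors the wrong origin: the linear term $\tfrac{q(M_n+A_\beta)A_\Sigma}{2}$ is a derivative with respect to the mean, not a ``first-order rearrangement'' of the quadratic form, and $1+k(A_\beta+\tilde A_\Sigma)$ is a distance between two parameter vectors, not a count of mixture components.

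The gap is quantitative, not just expository. Your lower envelope yields $-\log s_\xi \leq \log(1/a_\pi) + \tfrac{q}{2}\log(2\pi q\tilde A_\Sigma) + \tfrac12 q^2 A_\Sigma (M_n+A_\beta)^2$, whose dominant term carries one power of $A_\Sigma$ and a factor $q^2$. The target $2C_{M_n}(1+k(A_\beta+\tilde A_\Sigma))$ has dominant term proportional to $(M_n+A_\beta)^2 A_\Sigma^2\,(1+k(A_\beta+\tilde A_\Sigma))$, with $A_\Sigma$ squared and no $q^2$. These are incomparable: take $A_\Sigma$ small and $q$ large (say $A_\Sigma = 10^{-2}$, $q=100$, $k=1$, $A_\beta=\tilde A_\Sigma=1$) and your envelope bound exceeds $R_n$ by several orders of magnitude. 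Since $R_n$ is reused verbatim in the concentration step and the entropy bounds, establishing some finite bound is not enough; you need this one, and the only route to it in the paper is the gradient-times-parameter-distance argument. Your approach would work as a proof of a qualitative version of the lemma, but not of the statement as written.
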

\begin{dem}
\label{proofMegaLemme}
 Let $m\in \mathbb{N}^*$. Because $ f_m \in \mathcal{F}_m=\{f_m=-\log\left(\frac{s_m}{s_0}\right), s_m \in S_m\}$, there exists $s_m \in S_m$ such that $f_m=- \log\left( \frac{s_m}{s_0} \right)$.
For all $x \in \mathbb{R}^p$, denote $\xi(x) =(\beta _r x, \Sigma_r,\pi_r)_{r=1,\ldots,k}$ the parameters of $s_m(.|x)$. For all $i=1,\ldots,n$,
\begin{align*}
|f_m(Y_i|X_i)|\mathds{1}_T &= |\log(s_m(Y_i|X_i))-\log(s_0(Y_i|X_i))| \mathds{1}_T \\
& \leq \sup_{x \in \mathbb{R}^p} \sup_{\xi} \left|\frac{\partial \log (s_\xi(Y_i|x))}{\partial \xi} \right| ||\xi(x_i) - \xi_0(x_i)||_1 \mathds{1}_T,
\end{align*}
thanks to the Taylor formula.
Then, we need an upper bound of the partial derivate.

Let $s_\xi \in S_m$, with $\xi = (\beta_r,\Sigma_r,\pi_r)_{r=1,\ldots,k}$. For all $x \in \mathbb{R}^p$, for all $y \in \mathbb{R}^m$,
$$\log(s_{\xi}(y|x)) = \log \left( \sum_{r=1}^k f_r (x,y) \right)$$
where
$$f_r(x,y)=\frac{\pi_r}{(2 \pi)^{q/2} \det \Sigma_{r}} \exp\left[ -\frac{1}{2} \left(  \sum_{z_2=1}^q \left( \sum_{z_1=1}^q (y_{z_1} - \sum_{j=1}^p x_j \beta_{j,r,z_1}\right) \Sigma_{r,z_1,z_2}^{-1} \right) \left( y_{z_2} - \sum_{j=1}^p \beta_{r,j,z_2}x_j \right) \right].$$
Then,
\begin{align*}
\left| \frac{\partial\log(s_\xi(y|x))}{\partial(\beta_{l,z_1} x)} \right| &= \left| \frac{f_l(x,y)}{\sum_{r=1}^k f_r(x,y)} \right| \left( -\frac{1}{2} \sum_{z_2=1}^q \Sigma_{r,z_1,z_2}^{-1} (\beta_{r,z_2} x -y_{z_{2}}) \right) \leq \frac{q(|y|+A_\beta)A_\Sigma}{2} ;\\
\left| \frac{\partial\log(s_\xi(y|x))}{\partial(\Sigma_{l,z_1,z_2} )} \right| &= \frac{1}{\sum_{r=1}^k f_r(x,y)} \left| \frac{-f_l \text{Cof}_{z_1,z_2}(\Sigma_r)}{\det({\Sigma_r})} -\frac{f_l(x,y) (y_{z_1} -\beta_{r,z_1}x)(y_{z_2} -\beta_{r,z_2}x)\Sigma_{r,z_1,z_2}^{-2}}{2}\right|\\
&\leq \left| \frac{- \text{Cof}_{z_1,z_2}(\Sigma_r)}{\det({\Sigma_r})} +\frac{(y_{z_1} -\beta_{r,z_1}x)(y_{z_2} -\beta_{r,z_2}x) \Sigma_{r,z_1,z_2}^{-2}}{2}\right| \\
&\leq A_\Sigma + \frac{1}{2} (|y|+A_\beta)^2 A_\Sigma^2 ,
\end{align*}
where $\text{Cof}_{z_1,z_2}(\Sigma_r)$ is the $(z_1,z_2)$-cofactor of $\Sigma_r$.
We also have
$$\left| \frac{\partial \log(s_\xi (y,x))}{\partial \pi_l} \right| = \left| \frac{f_l(x,y)}{\pi_{l} \sum_{r=1}^k f_r (x,y)} \right| \leq \frac{1}{a_\pi}.$$
Thus, for all $y \in \mathbb{R}^q$,
$$\sup_{x \in \mathbb{R}^p} \sup_{\xi \in \tilde{\Xi}} \left|\frac{\partial \log (s_\xi(y|x))}{\partial \xi} \right| \leq  \max \left(\frac{1}{a_\pi} , A_\Sigma + \frac{1}{2} (|y|+A_\beta)^2 A_\Sigma^2  , \frac{q(|y|+A_\beta)A_\Sigma}{2} \right) = C_y.$$
We have $C_y\leq \left( A_\Sigma \wedge \frac{1}{a_\pi} \right) \left[ 1+ \frac{q+1}{2} A_\Sigma (|y|+A_\beta)^2 \right]$.

\begin{align*}
|f_m(Y_i|X_i)|\mathds{1}_T &\leq C_{Y_i} ||\xi(x_i) - \xi_0(x_i)||_1 \mathds{1}_T \\
&\leq C_{M_n} \sum_{r=1}^k (|\beta_r x_i -\beta_r^0 x_i| + |\Sigma_{r} - \Sigma_r^0 | + |\pi_r -\pi_r^0|).
\end{align*}
Since $f_m$ and $f^0_m$ belong to $\tilde{\Xi}$, we obtain
$$|f_m(Y_i|X_i)|\mathds{1}_T \leq 2 C_{M_n} (k A_\beta + k \tilde{A}_\Sigma +1)$$
and $||f_m||_n  \mathds{1}_T \leq 2 C_{M_n} (k A_\beta + k \tilde{A}_\Sigma +1)$ and

$$\sup_{f_m \in F_m} ||f_m||_n \mathds{1}_T \leq 2 C_{M_n} (k A_\beta + k \tilde{A}_\Sigma +1).$$
\end{dem}

For the next results, we need the following lemma, proved in \cite{Meynet}.
\begin{lem}
\label{lem7.3}
Let $\delta >0$ and $(X_{i,j})_{i=1,\ldots,n , j=1,\ldots,p} \in \mathbb{R}^{n \times p}$. There exists a family $B$ of $(2p+1)^{||X||^2_{\text{max},n} /\delta^2}$ vectors of $\mathbb{R}^p$ such that for all $\beta \in \mathbb{R}^p$ in the $\ell_1$-ball, there exists $\beta ' \in B$ such that
$$\frac{1}{n} \sum_{i=1}^n \left( \sum_{j=1}^p ||\beta_j - \beta_j '|| X_{i,j} \right)^2 \leq \delta^2.$$
\end{lem}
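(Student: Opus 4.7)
This is a classical Maurey-type empirical approximation statement for the $\ell_1$-ball, so I would prove it by the standard randomization trick, realising $\beta'$ as an empirical average of i.i.d.\ draws from a well-chosen distribution supported on the extreme points of the $\ell_1$-ball. The target is to match the cardinality $(2p+1)^{\|X\|_{\max,n}^{2}/\delta^{2}}$, which is exactly of the form $|V|^N$ for an alphabet $V$ of size $2p+1$ and sample size $N = \|X\|_{\max,n}^{2}/\delta^{2}$.

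\textbf{Randomized construction.} First I would fix $\beta \in \mathbb{R}^p$ with $\|\beta\|_1 \leq 1$ and introduce the dictionary $V = \{0\}\cup\{\pm e_1,\dots,\pm e_p\}$ of cardinality $2p+1$, where $e_j$ denotes the $j$-th canonical basis vector. Decomposing $\beta_j = \beta_j^{+} - \beta_j^{-}$, I equip $V$ with the probability weights $p_{+e_j} = \beta_j^{+}$, $p_{-e_j} = \beta_j^{-}$, $p_{0} = 1 - \|\beta\|_1$, so that the induced law has mean exactly $\beta$. Drawing $V_1,\ldots,V_N$ i.i.d.\ from this distribution, I would set $\beta' = N^{-1}\sum_{k=1}^N V_k$. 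For each fixed $i$ the random variable $\sum_j V_{k,j}X_{ij}$ takes values in $\{0,\pm X_{i1},\ldots,\pm X_{ip}\}$, hence its second moment is at most $\max_j X_{ij}^2$; by independence of the $V_k$,
\begin{equation*}
\mathbb{E}\!\left[\frac{1}{n}\sum_{i=1}^n\!\left(\sum_{j=1}^p(\beta_j-\beta'_j)X_{ij}\right)^{\!2}\right] \;=\; \frac{1}{Nn}\sum_{i=1}^n \mathrm{Var}\!\left(\sum_{j=1}^p V_{1,j}X_{ij}\right) \;\leq\; \frac{\|X\|_{\max,n}^{2}}{N}.
\end{equation*}

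\textbf{Extracting a deterministic net.} Choosing $N = \lceil \|X\|_{\max,n}^{2}/\delta^{2}\rceil$ makes the right-hand side at most $\delta^2$, so at least one deterministic realisation $(v_1,\ldots,v_N)\in V^{N}$ produces an empirical average $\beta' = N^{-1}(v_1+\cdots+v_N)$ whose squared empirical distance to $\beta$ is at most $\delta^2$. I would then define $B$ to be the collection of all such averages as $(v_k)$ ranges over $V^{N}$: by construction $|B| \leq |V|^{N} = (2p+1)^{\|X\|_{\max,n}^{2}/\delta^{2}}$, and every $\beta$ in the $\ell_1$-ball admits some $\beta' \in B$ meeting the bound, which is precisely the claim.

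\textbf{Anticipated obstacle.} The argument is essentially a one-line variance bound once the right dictionary is chosen, so the only delicate points are (i) adjoining the origin to $V$, which is what allows $\beta$ with $\|\beta\|_1 < 1$ to be written as a true convex combination of elements of $V$ rather than as a rescaled one, and (ii) interpreting $\|\beta_j - \beta'_j\|$ in the statement as $|\beta_j - \beta'_j|$ on $\mathbb{R}$, so that the inner sum $\sum_j (\beta_j-\beta'_j)X_{ij}$ produced by the moment computation matches the target quantity up to the irrelevant overall sign inside the square. Everything else is direct, and no chaining or extra geometric hypothesis on $X$ is needed.
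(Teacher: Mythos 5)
Your argument is correct and is exactly the Maurey-type probabilistic (empirical method) argument used in the proof of this lemma in Meynet's paper, to which the present paper simply defers ("See \cite{Meynet}"): same dictionary $\{0,\pm e_1,\dots,\pm e_p\}$ of size $2p+1$, same variance computation, same extraction of a deterministic net of averages. Your two side remarks are also on point — the $\|\cdot\|$ inside the sum is indeed to be read as $(\beta_j-\beta'_j)$ (that is how the lemma is invoked in the proof of the entropy bound), and the ceiling on $N$ is the usual harmless slack in the stated cardinality.
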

\begin{dem}
 See \cite{Meynet}.
\end{dem}

With this lemma, we can prove the following one:
\begin{lem}
\label{entropie}
Let $\delta >0$ and $m \in \mathbb{N}^*$. On the event $T$, we have the upper bound of the $\delta$-packing number of the set of functions $F_m$ equipped with the metric induced by the norm $||.||_n$:
$$N(\delta,F_m,||.||_n)\leq (2p+1)^{4 C_{M_n}^2 k^2 q^2 m^2 ||X||^2_{\max,n}/\delta^2} \left( 1+ \frac{8 C_{M_n} q^2k A_\Sigma}{\delta}\right) ^k \left(1+\frac{8 C_{M_n}}{\delta}\right) ^k.$$
\end{lem}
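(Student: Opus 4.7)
The plan is to build a $\delta$-net of $F_m$ under $\|\cdot\|_n$ by discretising the three parameter blocks $\beta$, $\Sigma$, $\pi$ separately and then taking the product of the resulting nets, transporting everything through a Lipschitz-type estimate for the map $\xi\mapsto f_m$.

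First, I would reuse the pointwise bound on the partial derivatives of $\log s_\xi$ that is computed inside the proof of Lemma \ref{lemmefastoche}: on the event $T$, each of these partial derivatives is dominated by $C_{M_n}$. A first-order Taylor expansion then yields, for any $s_m,s'_m\in S_m$ parameterised by $\xi,\xi'$ and any observation $(x_i,Y_i)$ with $\max_z|Y_{i,z}|\le M_n$,
\begin{align*}
|f_m(Y_i|x_i)-f'_m(Y_i|x_i)|
&\le C_{M_n}\sum_{r,j,z}|\beta_{r,j,z}-\beta'_{r,j,z}|\,|x_{i,j}|\\
&\quad+C_{M_n}\sum_r\sum_{z_1,z_2}|\Sigma_{r,z_1,z_2}-\Sigma'_{r,z_1,z_2}|+C_{M_n}\sum_r|\pi_r-\pi'_r|.
\end{align*}
Squaring, averaging over $i$, and applying the triangle inequality on $\|\cdot\|_n$ reduces the construction of a $\delta$-packing of $F_m$ to building three separate nets on the $\beta$, $\Sigma$ and $\pi$ blocks, with combined tolerance of order $\delta/C_{M_n}$.

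For the $\beta$-block, for each pair $(r,z)\in\{1,\ldots,k\}\times\{1,\ldots,q\}$ the vector $\beta_{r,z}\in\mathbb{R}^p$ lies in the $\ell_1$-ball of radius $m$, since $|s_\xi|_1\le m$. Applying Lemma \ref{lem7.3} at scale $\delta_\beta\asymp\delta/(C_{M_n}kq)$ produces a family of $(2p+1)^{m^2\|X\|_{\max,n}^2/\delta_\beta^2}$ candidates that approximates $\beta_{r,z}$ in the mixed seminorm $\sqrt{\tfrac1n\sum_i(\sum_j|\beta_{r,j,z}-\beta'_{r,j,z}||x_{i,j}|)^2}$ up to $\delta_\beta$. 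Taking the product over the $kq$ pairs and substituting the chosen value of $\delta_\beta$ produces the first factor $(2p+1)^{4C_{M_n}^2k^2q^2m^2\|X\|_{\max,n}^2/\delta^2}$. For the $\Sigma$-block, each entry of $\Sigma_r$ is bounded in absolute value by $\tilde{A}_\Sigma$, and only enters the Lipschitz bound through the $\ell_1$-distance $\sum_{z_1,z_2}|\Sigma_{r,z_1,z_2}-\Sigma'_{r,z_1,z_2}|$; a uniform grid whose cardinality is $1+8C_{M_n}q^2kA_\Sigma/\delta$ per matrix is enough, and the product over the $k$ matrices yields the second factor. For the $\pi$-block, each $\pi_r\in[a_\pi,1]$ is covered by a grid of cardinality $1+8C_{M_n}/\delta$, and multiplying over $r\le k$ gives the third factor.

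The main obstacle lies in the $\beta$-block: a naive coordinate-wise covering of $\mathbb{R}^p$ would introduce a dependence in $p$ inside the exponent, wrecking the high-dimensional regime where $p\gg n$. Lemma \ref{lem7.3} is tailored exactly to bypass this, but using it requires matching the particular mixed seminorm appearing in the Taylor bound with the quantity controlled by that lemma, and then carefully tuning $\delta_\beta$ so that the sum over the $kq$ pairs, each contributing an error of order $\delta_\beta$, stays within the prescribed budget. The secondary difficulty is balancing the three scales $\delta_\beta$, $\delta_\Sigma$, $\delta_\pi$ simultaneously so that the product of nets fits inside a single $\delta$-ball in $\|\cdot\|_n$; once this balancing is performed, the three advertised factors of the stated bound fall out from elementary bookkeeping on the constants.
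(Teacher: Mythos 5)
Your proposal follows essentially the same route as the paper's own proof: the Taylor/Lipschitz bound with constant $C_{M_n}$ inherited from Lemma \ref{lemmefastoche}, Lemma \ref{lem7.3} applied to the rescaled vectors $\beta_{r,\cdot,z}/m$ to handle the $\beta$-block without any $p$-dependence in the exponent, standard $\ell_1$-ball coverings for the $\Sigma$- and $\pi$-blocks, and a product of the three nets after splitting the tolerance $\delta$ among the blocks. The only divergence is bookkeeping in the $\beta$-block (you take an explicit product over the $kq$ pairs, whereas the paper invokes a single family $\mathcal{B}$), a point on which the paper itself is no more careful than you are, so this does not constitute a gap relative to the reference argument.
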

\begin{dem}
Let $m \in \mathbb{N}^*$ and $f_m \in F_m$. There exists $s_m \in S_m$ such that $f_m = - \log(s_m /s_0)$. Introduce $s'_m$ in $S$ and put $f'_m=- \log(s'_m /s_0)$.
Denote by $(\beta_r,\Sigma_r,\pi_r)_{r=1,\ldots,k}$ and $(\beta'_r,\Sigma'_r,\pi'_r)_{r=1,\ldots,k}$ the parameters of the densities $s_m$ and $s'_m$ respectively.
First, applying Taylor's inequality, on the event $\mathcal{T}=\{\max_{i\in \{1,\ldots,n\}} \max_{z\in \{1,\ldots,q\}} |Y_i|\leq M_n\}$, we get, for all $i=1,\ldots,n$,
\begin{align*}
 |f_m(Y_i|X_i)-f'_m(Y_i|X_i)| \mathds{1}_{\mathcal{T}} &= |\log(s_m(Y_i|X_i))-\log(s'_m(Y_i|X_i))| \mathds{1}_{\mathcal{T}} \\
 &\leq \sup_{x \in \mathbb{R}^p} \sup_{\xi \in \tilde{\Xi}} \left| \frac{\partial \log(s_\xi(Y_i|x_i))}{\partial{\xi}} \right| ||\xi(x_i) -\xi'(x_i)||_1 \mathds{1}_{\mathcal{T}}\\
 &\leq C_{M_n} \sum_{r=1}^k \left( \sum_{z=1}^q\left|  \beta_{r,z} x_i - \beta'_{r,z} x_i \right| + ||\Sigma_{r} - \Sigma'_r ||_1 + |\pi_r -\pi'_r|\right).
\end{align*}
Thanks to the Cauchy-Schwarz inequality, we get that
\begin{align*}
 &(f_m(Y_i|X_i) -f'_m(Y_i|X_i))^2 \mathds{1}_{\mathcal{T}} \leq 2 C_{M_n}^2 \left[\left(\sum_{r=1}^k \sum_{z=1}^q \left| \beta_r X_i - \beta'_r X_i \right| \right)^2 + (||\Sigma - \Sigma' ||_1 + ||\pi -\pi'||)^2 \right] \\
 &\leq 2 C_{M_n}^2 \left[ k q \sum_{r=1}^k \sum_{z=1}^q \left( \sum_{j=1}^p \beta_{r,j,z} X_{i,j} - \sum_{j=1}^p \beta'_{r,j,z} X_{i,j} \right)^2 + (||\Sigma - \Sigma' ||_1 + ||\pi -\pi'||)^2 \right],
\end{align*}
and
\begin{align*}
||f_m-f'_m||^2_n \mathds{1}_{\mathcal{T}} \leq &2 C_{M_n}^2 \left[ k q \sum_{r=1}^k \sum_{z=1}^q \frac{1}{n} \sum_{i=1}^n \left( \sum_{j=1}^p \beta_{r,j,z} X_{i,j} - \sum_{j=1}^p \beta'_{r,j,z} X_{i,j} \right)^2  \right. \\
& \left. \phantom{\left(\frac{1}{1}\right)^2}+ (||\Sigma - \Sigma' ||_1 + ||\pi -\pi'||)^2 \right].
\end{align*}
Denote by $a= k q \sum_{r=1}^k \sum_{z=1}^q \frac{1}{n} \sum_{i=1}^n \left( \sum_{j=1}^p \beta_{r,j,z} X_{i,j} - \sum_{j=1}^p \beta'_{r,j,z} X_{i,j} \right)^2$.
Then, for all $\delta >0$, if $a \leq \delta^2/(4C_{M_n}^2)$, $||\Sigma - \Sigma' ||_1 \leq \delta/(4 C_{M_n}) $ 
and $ ||\pi -\pi'|| \leq \delta/(4 C_{M_n})$, then $||f_m-f'_m||^2_n \leq \delta^2$.
To bound $a$, we write
$$a = k q m^2 \sum_{r=1}^k \sum_{z=1}^q \frac{1}{n} \sum_{i=1}^n \left( \sum_{j=1}^p \frac{\beta_{r,j,z}}{m} X_{i,j} - \sum_{j=1}^p \frac{\beta'_{r,j,z}}{m} X_{i;j} \right)^2$$
and we apply lemma \ref{lem7.3} to $\beta_{r,.,z}/m$ for all $r\in \{1,\ldots,k\}$, and for all $z \in \{1,\ldots,q\}$. Since $s_m \in S_m$, we have $\sum_{z=1}^q \sum_{j=1}^p \left|\frac{\beta_{r,j,z}}{m} \right| \leq1$
and thus there exists a family $\mathcal{B}$ of $(2p+1)^{4 C_{M_n}^2 q^2 k^2 m^2 ||x||^2_{\max,n}/\delta^2}$ vectors of $\mathbb{R}^p$ such that for all $r\in \{1,\ldots, k\}$, for all $z \in \{1,\ldots,q\}$, for all $\beta_{r,.,z}$, there exists
$\beta'_r \in \mathcal{B}$ such that $a\leq \delta^2/(4 C_{M_n}^2)$. 
Moreover, since $||\Sigma||_1 \leq q^2 k A_\Sigma$ and $||\pi||_1\leq 1$, we get that, on the event $\mathcal{T}$,
\begin{align*}
N(\delta,F_m,||.||_n)& \leq \text{card}(\mathcal{B}) N\left(\frac{\delta}{4 C_{M_n}},B_1^k(q^2 k A_\Sigma),||.||_1\right) N\left(\frac{\delta}{4 C_{M_n}}, B_1^k(1),||.||_1 \right)\\
&\leq (2p+1)^{4 C_{M_n}^2 q^2 k^2 m^2 ||x||^2_{\max,n}/\delta^2} \left( 1+ \frac{8 C_{M_n} q^2 k A_\Sigma}{\delta}\right) ^k \left(1+\frac{8 C_{M_n}}{\delta}\right) ^k
\end{align*}
\end{dem}

\section{Acknowledgment}
 I am grateful to Pascal Massart for suggesting me to study this problem, and for stimulating discussions.

\bibliography{biblio}
\bibliographystyle{plain}
\end{document}